\newtheorem{theorem}{Theorem}
\newtheorem{proposition}{Proposition}
\newtheorem{lemma}{Lemma}
\newtheorem{definition}{Definition}
\newtheorem{remark}{Remark}
\definecolor{Red}{cmyk}{0,1,1,0}
\definecolor{Blue}{cmyk}{1,1,0,0}
\begin{document}

\title{Hyperbolicity and genuine nonlinearity 
conditions for certain p-systems of conservation laws, weak solutions and the entropy condition}
\author{Edgardo P\'erez\\
\footnotesize{D\'epartement de Math\'ematiques}\\
\footnotesize{Universit\'e de Brest, 6, rue Victor le Gorgeu, 29285 Brest, France}\\
\footnotesize{\texttt{edgardomath@gmail.com}}\\
\\[0.3cm]
Krzysztof R\'ozga\\
\footnotesize{Department of Mathematical Sciences}\\
\footnotesize{University of Puerto Rico at Mayaguez, Mayaguez,Puerto Rico 00681-9018}\\
\footnotesize{\texttt{krzysztof.rozga@upr.edu}}
}
\maketitle


\begin{abstract}

We consider a p-system of conservation laws that emerges in one dimensional elasticity theory. 
Such system is determined by a function $W$, called strain-energy function. 
We consider four forms of $W$ which are known in the literature. 
These are St.Venant-Kirchhoff, Ogden, Kirchhoff modified, Blatz-Ko-Ogden forms. 
In each of those cases we determine the conditions for the parameters $\rho_0$, $\mu $ and
 $\lambda$, under which the corresponding system is hyperbolic and genuinely nonlinear.
We also establish what it means a weak solution of an initial and boundary value problem. Next we concentrate on a particular problem whose weak solution is obtained in a linear theory by means of D'Alembert's formula. In cases under consideration the p-systems are nonlinear, so we solve them employing Rankine-Hugoniot conditions. Finally we ask if such solutions satisfy the entropy condition. For a standard entropy function we provide a complete answer, except of the Blatz-Ko-Ogden case. For a general strictly convex entropy function the result is that for the initial value of velocity function near zero these solutions satisfy the entropy condition, under the assumption of hyperbolicity and genuine nonlinearity.

\end{abstract}


\section{Introduction}

The mathematical theory of hyperbolic systems of conservation laws were
started by Eberhardt Hopf in $1950$, followed in a series of studies by Olga
Oleinik, Peter D. Lax and James Glimm \cite{LAX2} . The class of
conservation laws is a very important class of partial differential
equations because as their name indicates, they include those equations that
model conservation laws of physics (mass, momentum, energy, etc).



As important examples of hyperbolic systems of balance laws arising in
continuum physics we have: Euler's equations for compressible gas flow, the
one dimensional shallow water equations \cite{EVA}, Maxwell's equations in
nonlinear dielectrics, Lundquist's equations of magnetohydrodynamics and
Boltzmann equation in thermodynamics \cite{CONS} and equations of elasticity
\cite{MAR}.

One of the main motivations of the theory of hyperbolic systems is that they
describe for the most part real physical problems, because they are
consistent with the fact that the physical signals have a finite propagation
speed \cite{MAR}. Such systems even with smooth initial conditions may fail
to have a solution for all time, in such cases we have to extend the concept
of classical solutions to the concept of a weak solution or generalized
solution \cite{EVA}.

In the case of hyperbolic systems, the notion of weak solution based on
distributions does not guarantee uniqueness, and it is necessary to devise
admissibility criteria that will hopefully single out a unique weak
solution. Several such criteria have indeed been proposed, motivated by
physical and/or mathematical considerations. It seems that a consensus has
been reached on this issue for such solutions, they are called entropy
conditions \cite{DAFER2}. Nevertheless, to the question about existence and
uniqueness of generalized solutions subject to the entropy conditions, the
answer is, in general, open. For the scalar conservation law, the questions
existence and uniqueness are basically settled \cite{EVA}. For genuinely
nonlinear systems, existence (but not uniqueness) is known for initial data
of small total variation \cite{RINAR}. Some of the main contributors to the
field are Lax , Glimm , DiPerna, Tartar, Godunov, Liu, Smoller and Oleinik
\cite{LAX1}, \cite{GLIM}, \cite{OLEI} .

All of this motivates us to study systems of conservation laws that emerge
in the theory of elasticity. These systems are determined by constitutive
relations between the stress and strain. For hyperelastic materials, the
constitutive relations can be written in a simpler form. Now the stress is
determined by a scalar function of the strain called the strain-energy
function $W$. A further simplification of a stress-strain relation is
obtained for isotropic materials.

In applications some specific strain-energy functions are used; in our work
we consider four different forms of $W$. In all our studies we restrict
ourselves to the case of one dimensional elasticity. \

The first important question that arises is the following: \ given the
function $W$, is the corresponding system of PDE's hyperbolic? By answering
it, we can assess how good the model corresponding to that particular $W$ is.

There exists also another important condition called genuine nonlinearity
condition, which is related to the entropy condition, \cite{RINAR}.
According to our previous remarks the entropy condition can be considered a
physical one. This implies an importance of genuine nonlinearity condition
as well. For that reason our second question is about the validity of that
particular condition for the models under study.

Our third important question is how manageable is the entropy condition, that
is, given a weak solution of the elasticity system, can we
conclude if it is or not an entropy solution? In general, except of the
linear case, it is not easy to answer that question, because in the entropy
condition there appear two functions: entropy and entropy-flux, which
satisfy a given nonlinear system of PDE's, the first of them is convex and
otherwise they are arbitrary.

For this reason we restrict ourselves to study the entropy condition for a
relatively simple weak solutions, which correspond to a well understood
physical situation of what can be called a compression shock. Such solutions
are obtained easily in linear case by means of D'Alembert's formula and by
analogy in nonlinear case, employing the Rankine-Hugoniot conditions. If for
a given model ($W$ function) such solution does not satisfy the entropy
condition, we can consider the model as inadequate to describe the
compression shock.

In this work we give answers to all mentioned above questions. The  obtained
results do not appear in the reviewed literature.

It has to be added also that the concept of a weak solution is well known in
the literature. For example in \cite{EVA} one can find a definition of a
weak solution of an initial value problem for a system of conservation laws
in two variables. Using a general idea of that concept we define what it
means to be a weak solution of an initial and boundary value problem for
p-systems. This definition does not appear explicitly in the reviewed
literature.



The paper is organized as follows: In Section 2 the main notation and
concepts are introduced: conservation laws, hyperbolic system,
weak solution, Rankine-Hugoniot condition, genuine nonlinearity,
entropy/entropy-flux pair. Next, we give a brief presentation of basic
concepts of the theory of elasticity, such as, deformation gradient,
deformation tensor, second Piola-Kirchhoff stress tensor and first Piola-tensor.
 We also present four forms of $W$ (strain-energy function) appearing in the theory
of elasticity, to model a behavior of certain materials. We refer to them
as: St.Venant-Kirchhoff, Kirchhoff modified, Ogden and Blatz-Ko-Ogden
functions.

In Section 3 we consider one dimensional reduction of the system of
partial differential equations for elasticity, which depends on the
strain-energy function $W$ and results in a p-system. Also, we introduce the
notions of hyperbolicity, no interpenetration of matter and genuine
nonlinearity.

In Section 4
 we provide the concept of weak solutions for various
versions of an IBVP (initial and boundary value problem) for a p-system,
including a particular case of IBVP, $IBVP_{V_0}$, and we find its solutions
employing the Rankine-Hugoniot conditions, we denote such solution by $S(V_0)
$.

In Section 5 we discuss the notions of an entropy/entropy-flux
pair for a p-system, entropy condition, entropy condition for a solution of 
$IBVP_{V_0}$ and standard entropy function. We also establish the importance
of the requirements of hyperbolicity(strict) and genuine nonlinearity, as
being essential in proving if a weak solution is an entropy solution.

In Section 6  we show the results concerning
to hyperbolicity and genuine nonlinearity for the models under consideration
and the entropy condition corresponding to a standard entropy function for a
solution of $IBVP_{V_0}$. 

Finally, in Section 7 we present a summary of the main conclusions of our research.

\section{Preliminaries}
\subsection{Conservation laws and related concepts}

We begin this section with some essential definitions, that we will use in
the course of this work.%

A conservation law asserts that the change in the total amount of a physical
entity contained in any bounded region $G\subset \mathbb{R}^n $ of space is
due to the flux of that entity across the boundary of $G$. In particular,
the rate of change is
\begin{equation}  \label{c-law}
\frac{d}{dt}\int_G \mathbf{u}dX=-\int_{\partial G}\mathbf{F(u)n} dS,
\end{equation}

where $\mathbf{u}=\mathbf{u}(X,t)=(u^1(X,t),\ldots ,u^m(X,t) )\ (X\in
\mathbb{R}^n, t\geq 0)$ measures the density of the physical entity under
discussion, the vector \\
$\mathbf{F}:\mathbb{R}^m\rightarrow \mathbb{M}%
^{m\times n}$ describes its flux and $\mathbf{n}$ is the outward normal to
the boundary $\partial G$ of $G$. Here $\mathbf{u}$ and $\mathbf{F}$ are $C^1
$ functions. Rewriting (\ref{c-law}), we deduce

\begin{equation}  \label{ec-div}
\int_G \mathbf{u}_t dX=-\int_{\partial G}\mathbf{F(u)n}dS=-\int_G div
\mathbf{F(u)}dX.
\end{equation}

As the region $G\subset \mathbb{R}^n$ was arbitrary, we derive from (\ref%
{ec-div}) this initial-value problem for a general system of conservation
laws:
\begin{equation}
\left\{ \begin{aligned} \mathbf{u}_t +div \mathbf{F(u)}&=0 &\text{in}&\
\mathbb{R}^n \times (0,\infty) \\ \mathbf{u}&=g &\text{on}&\ \mathbb{R}^n
\times \{t=0\} \end{aligned} \right.
\end{equation}

where $g=(g^1,\ldots, g^m)$ is a given function describing the initial
distribution of $\mathbf{u}=(u^1,\ldots ,u^m).$ In particular, the
initial-value problem for a system of conservation laws in one-dimensional
space, takes the following form


\begin{equation}  \label{c-law2}
\mathbf{u}_t + \mathbf{F(u)}_X=0 \ \ \ \text{in}\ \mathbb{R} \times
(0,\infty)
\end{equation}
with initial condition given by
\begin{equation}  \label{IC}
\mathbf{u}(X,t)=g \ \ \ \text{on}\ \mathbb{R} \times \{t=0\}
\end{equation}
where $\mathbf{F}:\mathbb{R}^m\rightarrow \mathbb{R}^m $ and $g:\mathbb{R}
\rightarrow \mathbb{R}^m$ are given and $\mathbf{u}:\mathbb{R}\times
[0,\infty)\rightarrow \mathbb{R}^m$ is the unknown, $\mathbf{u}=\mathbf{u}%
(X,t)$ \cite{EVA}.

For $C^1$ functions the conservation law (\ref{c-law2}) is equivalent to
\begin{equation}  \label{matrix-law}
\mathbf{u}_t+\mathbf{B(u)}\mathbf{u}_X=0 \ \ \ \text{in}\ \mathbb{R} \times
(0,\infty)
\end{equation}
where $\mathbf{B}:\mathbb{R}^m\rightarrow \mathbb{M}^{m\times m} $ is given
by $B(z)=DF(z)$, for \\
$z=(z_1,\ldots, z_m) \in \mathbb{R}^m$, where
\begin{equation}  \label{matrix}
D\mathbf{F}(z)= \left(
\begin{array}{ccc}
F^1_{z_1} & \cdots & F^1_{z_m} \\
\vdots & \ddots & \vdots \\
F^m_{z_1} & \cdots & F^m_{z_m}%
\end{array}
\right).
\end{equation}

 If for each $z \in \mathbb{R}^m$ the eigenvalues of $\mathbf{B}%
(z)$ are real and distinct, we call the system (\ref{matrix-law}) \textbf{strictly
hyperbolic} \cite{EVA}.

 A system of conservation laws (\ref{matrix-law})  is said to be
\textbf{genuinely nonlinear} in a region $\Omega \subseteq \mathbb{R}^{n}$ if
\begin{equation*}
\nabla \lambda_k \cdot \mathbf{r}_k \neq 0,
\end{equation*}
for $k=1,2,\ldots,n $ at all points in $\Omega$, where $\lambda_k(z)$ are
the eigenvalues of $\mathbf{B}(z)$, with corresponding eigenvectors $\mathbf{%
r}_k(z)$ \cite{RINAR}.

\begin{definition}
The p-system is a conservation law being this collection of two equations:
\begin{equation}
\left\{ \begin{aligned} u^2_t - p(u^1)_X&=0 \ \text{(Newton's law)} \\
u^1_t-u^2_X &=0 \ \text{(compatibility condition)} \end{aligned} \right.
\end{equation}
in $\mathbb{R}\times (0,\infty)$, where $p:\mathbb{R}\rightarrow \mathbb{R}$
is given. Here $F(z)=(-p(z_1),-z_2)$ for $z=(z_1,z_2)$\ \cite{EVA}.
\end{definition}

\begin{definition}
A weak solution of (\ref{c-law2}) is a function $\mathbf{u}\in L^\infty(%
\mathbb{R} \times (0,\infty); \mathbb{R}^m)$ such that
\begin{equation*}
\int_0^\infty \int_{-\infty}^{\infty} (\mathbf{u}\cdot \mathbf{\phi}_t +%
\mathbf{F(u)}\cdot \mathbf{\phi}_X) dXdt + \int_{-\infty}^{\infty} (\mathbf{g%
}\cdot \mathbf{\phi})|_{t=0} \ dX =0
\end{equation*}

for every smooth $\phi:\mathbb{R}\times [0,\infty)\rightarrow \mathbb{R}^m$,
with compact support \ \cite{EVA}.
\end{definition}








\subsection{Basic notions of Elasticity Theory}

We consider a continuous body which occupies a connected open subset of a
three-dimensional Euclidean point space, and we refer to such a subset as a
configuration of the body. We identify an arbitrary configuration as a
reference configuration and denote this by $\mathfrak{B_{0}}$. Let points in
$\mathfrak{B_{0}}$ be labelled by their position vectors $\mathbf{X}%
=(X^{1},X^{2},X^{3})$, where $X^{1},X^{2}$ and $X^{3}$ are coordinates
relative to an arbitrary chosen Cartesian orthogonal coordinate system. Now
suppose that the body is deformed from $\mathfrak{B_{0}}$ so that it
occupies a new configuration, which is denoted by $\mathfrak{B_{t}}.$ We
refer to $\mathfrak{B_{t}}$ as the deformed configuration of the body. The
deformation is represented by the mapping $\phi _{t}:\mathfrak{B_{0}}%
\rightarrow \mathfrak{B_{t}}$ which takes points $\mathbf{X}$ in $\mathfrak{%
B_{0}}$ to points $\mathbf{x}=(x_{1},x_{2},x_{3})$ in $\mathfrak{B_{t}}$,
where $x_{1},x_{2}$ and $x_{3}$ are coordinates relative to the same
Cartesian orthogonal coordinate system as $X^{1},X^{2}$ and $X^{3}$. Thus,
the position vector of the point $\mathbf{X}$ in $\mathfrak{B_{t}}$, which
is denoted by $\mathbf{x}$, is
\begin{equation*}
\mathbf{x=\phi (X,t)\equiv \phi _{t}(X)}.
\end{equation*}

The mapping $\phi$ is called the deformation from $\mathfrak{B_0}$ to $%
\mathfrak{B_t}$. We require $\phi_t$ to be sufficiently smooth, orientation
preserving and invertible. The last two requirements mean physically, that
no interpenetration of matter occurs.

\subsubsection{Deformation gradient, deformation tensor, strain-energy function
and time evolution of an elastic body}

Now, we introduce some basic definitions of Elasticity theory, namely:
deformation gradient, deformation tensor, second Piola-Kirchhoff stress
tensor, first Piola-tensor \cite{MAR}. We restrict our discussion to
hyperelastic, homogeneous and isotropic materials.

\begin{itemize}

\item 
$F^{a}_A(X,t)=\frac{\partial \phi^{a}}{\partial X^{A}}$ \ \ (Deformation gradient ),
 $a,A\in \{1,2,3\}$.

\item $C=F^{T}F$ or componentwise  by $C_{AB}=\delta_{ij}F_A^iF_B^j$, 
$A,B\in \{1,2,3\}$. \ \ (Deformation tensor).
\item Principal invariants of $C$:

$I_1=\text{tr}(C),
I_2=(\det(C))\text{tr}(C^{-1}),
I_3(C)=\det(C).$

\item (Second Piola-Kirchhoff stress tensor)
\begin{equation*}
S^{AB}=2\bigg\{\frac{\partial W}{\partial I_{1}}G^{AB}+\bigg(\frac{\partial W%
}{\partial I_{2}}I_{2}+\frac{\partial W}{\partial I_{3}}I_{3}\bigg)C^{-1}-%
\frac{\partial W}{\partial I_{2}}I_{3}C^{-2}\bigg\},
\end{equation*}

where $G^{AB}$ is Kronecker's delta and $W$ is the strain-energy function.

\item (The first Piola-tensor)
$$P^{iA}=F^i_B S^{BA}=F^i_1 S^{1A}+F^i_2 S^{2A}+F^i_3 S^{3A},\
\text{where} \ i,A, B\in \{1,2,3\}.$$
\end{itemize}

We consider the following four forms of $W,$ \cite{MAR}:

\begin{enumerate}
\item St.Venant-Kirchhoff

\begin{equation}  \label{Kir}
W=\frac{\lambda}{8}(I_1-3)^2+\frac{\mu}{4}(I_1^2-2I_2-2I_1+3).
\end{equation}

\item Kirchhoff modified
\begin{equation}  \label{KirM}
W=\frac{\lambda}{8}(\ln I_3)^2+\frac{\mu}{4}(I_1^2-2I_2-2I_1+3).
\end{equation}

\item Ogden
\begin{equation}  \label{Ogden}
W=\frac{\mu}{2}\big(I_1-3-2\ln(\sqrt{I_3})\big)+\frac{\lambda}{2}(\sqrt{I_3}-1)^2.
\end{equation}

\item Blatz-Ko-Ogden
\begin{equation}
W=f\frac{\mu }{2}[(I_{1}-3)+\frac{1}{\beta }(I_{3}^{-\beta }-1)]+(1-f)\frac{%
\mu }{2}\bigg[\frac{I_{2}}{I_{3}}-3+\frac{1}{\beta }(I_{3}^{\beta }-1)\bigg].
\label{B-KO}
\end{equation}
\end{enumerate}

We can see that the functions (\ref{Kir})-(\ref{Ogden}) depend on two
parameters: Lam\'e moduli $\lambda$ and $\mu$, where $\lambda,\mu >0 $. In (%
\ref{B-KO}) $\beta=\frac{\lambda}{2\mu} $ and this $W$ depends also on a
parameter $f$ restricted by $0<f<1.$

Finally, the components of the mapping
\begin{equation*}
\phi(\mathbf{X},t)=(\phi^1(\mathbf{X},t),\phi^2(\mathbf{X},t), \phi^3(%
\mathbf{X},t))
\end{equation*}
are subject to the following system of PDE's, describing the evolution of an
elastic body:
\begin{equation}  \label{ec-nonlinear}
\rho_0 \frac{\partial^2 \phi^i}{\partial t^2}=\frac{\partial P^{iA}}{%
\partial X^A}.
\end{equation}
Here $\rho_0=\rho_0(\mathbf{X})$ is the mass density in reference
configuration assumed further to be constant.



\section{One-dimensional reduction for certain models of elastic materials}

In this section we present the reduction to the one-dimensional case, 
which we will maintain in all the paper. Also, we rewrite the
requirements of: hyperbolicity, no interpenetration of matter and genuine nonlinearity,
to the one-dimensional case.   

 We assume that there is a motion of particles only in the direction of   $X^1$-axis, that is:
\begin{equation}\label{reduction}
\begin{cases}
\phi^1(\mathbf{X},t)=X^1+U(X^1,t)\\
\phi^2(\mathbf{X},t)=X^2\\
\phi^3(\mathbf{X},t)=X^3.
\end{cases}
\end{equation}

Then  $F^i_{A}, C_{AB}, C^{-1}_{AB}, I_1, I_2 $ and $I_3$ become

\begin{equation}
F^{i}_{A}=
\left(
  \begin{array}{ccc}
    \frac{\partial \phi^1}{\partial X^1} & \frac{\partial \phi^1}{\partial X^{2}} & \frac{\partial \phi^1}{\partial X^{3}} \\\\
    \frac{\partial \phi^2}{\partial X^1} & \frac{\partial \phi^2}{\partial X^2} & \frac{\partial \phi^2}{\partial X^3} \\\\
    \frac{\partial \phi^3}{\partial X^1} & \frac{\partial \phi^3}{\partial X^2}& \frac{\partial \phi^3}{\partial X^3}\\
   \end{array}
 \right)=
 \left(
   \begin{array}{ccc}
     \frac{\partial \phi^1}{\partial X^1} & 0 & 0 \\\\
     0 & 1 & 0 \\\\
     0 & 0& 1 \\
   \end{array}
 \right).
\end{equation}

\begin{equation}
 C_{AB}=
 \left(
   \begin{array}{ccc}
     (F^1_{1})^2 & 0 & 0 \\
     0 & 1 & 0 \\
     0 & 0 & 1 \\
  \end{array}
 \right), 
 C_{AB}^{-1}=\left(
          \begin{array}{ccc}
            1/(F^{1}_{ 1})^2 & 0 & 0 \\
            0 & 1 & 0 \\
            0 & 0 & 1\\
          \end{array}
       \right).
\end{equation}

\begin{equation*}
I_1=2+(F^1_{1})^2,
   I_2=2(F^1_{1})^2+1,
I_3=(F^1_{1})^2.
\end{equation*}

Therefore   the  system (\ref{ec-nonlinear}) becomes

 $$\rho_0 \frac{\partial^2 \phi^i}{\partial t^2}=\frac{\partial (P^{iA})}{\partial X^A}.$$

More specifically,
\begin{equation} \label{ec-03-02}
 \begin{split}
 \rho_0 \frac{\partial^2 \phi^1}{\partial t}&= P{,}^{11}_{1}+P{,}^{12}_{2}+P{,}^{13}_{3},\\
 \rho_0 \frac{\partial^2 \phi^2}{\partial t}&= P{,}^{21}_{1}+P{,}^{22}_{2}+P{,}^{23}_{3},\\
 \rho_0 \frac{\partial^2 \phi^3}{\partial t}&= P{,}^{31}_{1}+P{,}^{32}_{2}+P{,}^{33}_{3}.
 \end{split}
\end{equation}

Notice that $P^{11}=F^1_1S^{11}+{F^1_2} S^{21}+F^1_3S^{31}=F^1_1S^{11},$
$P^{12}=P^{21}=P^{13}=P^{31}=P^{23}=P^{32}=0, P^{22}=S^{22},P^{33}=S^{33},$
and $ \frac{\partial \phi^1}{\partial t}=\frac{\partial U}{\partial t},
 \ \frac{\partial \phi ^2}{\partial t}=0
  = \frac{\partial \phi ^3}{\partial t}.$
Consequently  (\ref{ec-03-02}) is reduced to one equation, which after denoting $X^1$ by $X$ and putting $P=\frac{P^{11}}{\rho_0},$ reads

\begin{equation}\label{ec-nonlinearp}
\frac{\partial^2 U}{\partial t^2}=\frac{\partial P}{\partial X}.
\end{equation}

Setting $V=\frac{\partial U}{\partial t}$ and $\Gamma=\frac{\partial U}{\partial X}$ one obtains a p-system of first order PDE's:

\begin{equation} \label{ec-p}
\left\{
  \begin{array}{ll}
    V_t - (P(\Gamma))_X&=0 \\
    \Gamma_t-V_X &=0
  \end{array}
\right.
\end{equation}

\begin{remark} \label{matter}
Under the assumption  (\ref{reduction}) the requirement of no interpenetration of matter means that $\phi_X>0$, i.e., $1+\mathbf{U}_X>0$.
\end{remark}

Notice that, the p-system (\ref{ec-p}) can be rewritten as

\begin{equation}\label{ec-p2}
\mathbf{u}_t+\mathbf{B(u)}\mathbf{u}_X=0
\end{equation}

where $\mathbf{u}=(V,\Gamma)$ and
$\mathbf{B}=
\left(
   \begin{array}{cc}
     0 & -P'(\Gamma) \\
     -1 & 0 \\
   \end{array}
 \right).$
 The eigenvalues of $\mathbf{B}$ are  $\lambda_1=-\sqrt{P'(\Gamma)}$ and $\lambda_2= \sqrt{P'(\Gamma)}$ with corresponding eigenvectors
$\mathbf{r}_1=(\sqrt{P'(\Gamma)},1)$ and $\mathbf{r}_2=(-\sqrt{P'(\Gamma)},1)$.

\begin{remark}
Note that for our case of a p-system, no interpenetration of matter condition, $\phi_X>0$, is equivalent to  $\Gamma>-1$, since $\phi_X=1+U_X.$
\end{remark}

\begin{remark}

\begin{itemize}
  \item The p-system (\ref{ec-p2}) is  \textbf{strictly hyperbolic} if $P'>0$, everywhere in the domain of  $P(\Gamma).$
  \item The p-system (\ref{ec-p2}) is \textbf{genuinely nonlinear} in a region $\Omega$ of the domain of $P(\Gamma)$ if $P''\neq0$ everywhere in $\Omega$.

Indeed, it is so since
 $-\nabla \lambda_1 \cdot \mathbf{r}_1=\nabla \lambda_2 \cdot \mathbf{r}_2=\frac{P''(\Gamma)}{2\sqrt{P'(\Gamma)}} .$

 By continuity of $P''(\Gamma)$, genuine nonlinearity means that $P''(\Gamma)$ is of constant sign in $\Omega$.
 However we will call a  p-system  (\ref{ec-p2})  genuinely nonlinear if  $P''<0$, since this requirement  plays  an important role in studying entropy inequality.
\end{itemize}
\end{remark}
We remark also that  hyperbolicity condition is an essential physical  requirement, since it guarantees  that particles have a finite propagation
speed. 
Now, we obtain explicit forms of the function $P$ for the 
models under consideration. Indeed,







{St.Venant-Kirchhoff:}
  $P(\Gamma)=\big(\frac{\lambda+2\mu}{2\rho_0}\big)(1+\Gamma)(2+\Gamma)\Gamma.$

{Modified Kirchhoff:}
 $P(\Gamma)=\frac{1}{\rho_0}\bigg(\mu(1+\Gamma)^3-\mu(1+\Gamma)+\lambda \frac{\ln(1+\Gamma)}{(1+\Gamma)} \bigg).$

{Ogden:}
$P(\Gamma)=\frac{1}{\rho_0}\big(\lambda \Gamma+\mu \frac{(2+\Gamma)\Gamma}{\Gamma+1}\big).$

{Blatz-Ko and Ogden:}
\begin{equation}\label{p4}
P(\Gamma)=\frac{\mu(1+\Gamma)}{\rho_0}\Bigg\{f \bigg[1-(1+\Gamma)^{-2\beta-2}\bigg]+\frac{(1-f)}{(1+\Gamma)^4}\bigg[(1+\Gamma)^{2\beta+2}-1\bigg]\Bigg\}.
\end{equation}


\begin{definition}
If $P(\Gamma)=\frac{(\lambda +2\mu)}{\rho_0}\Gamma$, the model is called linear model.
\end{definition}


%

\section{Weak solution of an IBVP for a p-system}

In this section we give the concept of  weak solutions for various versions of an IBVP (initial and boundary value problem), for a p-system,
including a particular case of IBVP, $IBVP_{V_0}$.\\
We also provide notions  of an entropy/entropy-flux pair  and entropy condition for a solution of $IBVP_{V_0}$.

 Our aim is to give an answer to the question about a weak solution  for an IVBP for (\ref{ec-p}) with these initial
and boundary conditions:
\begin{equation}\label{ec-m1}
\begin{cases}
V(X,0)=f(X)   \\
 \Gamma(X,0)=g(X)\\
 P(\Gamma(0,t))+a(t)V(0,t)=c(t)
 \end{cases}
\end{equation}
 or
 \begin{equation}\label{ec-m2}
\begin{cases}
V(X,0)=f(X)   \\
 \Gamma(X,0)=g(X)\\
 V(0,t)+b(t)P(\Gamma(0,t))=c(t).
 \end{cases}
\end{equation}

To define a weak solution of such IBVP   in the first quadrant of the $Xt$-plane, we use arbitrary $C^1$ functions $\varphi$, $\psi$ and $\chi$,
$$\varphi,\psi, \chi :[0,\infty)\times[0,\infty)\rightarrow \mathbb{R}$$
of compact supports. We refer to those functions as test functions.

\begin{proposition}\label{propo2}
Let $f, g,a$ and $c$ be $C^1$ functions  on $[0,\infty)$, and let $V(X,t)$, $\Gamma(X,t)$
 be $C^1$ functions   on $[0,\infty)^2$, such that $P(\Gamma(X,t))$  is $C^1$ on $([0,\infty)^2)$. Then the pair $(V,\Gamma)$ is a classical solution
  of IBVP (\ref{ec-p}),(\ref{ec-m1}), if and only if for all $\varphi$ and $\psi$, with $\psi$  satisfying the condition  $\psi(0,t)=0$,
    it holds

\begin{equation} \label{ws3}
-\int_0^{\infty} g(X)\psi(X,0)dX -\int_0 ^{\infty} \int_0 ^{\infty}\Gamma \psi_t dtdX
+\int_0 ^{\infty}\int_0 ^{\infty}V \psi_X dXdt=0
\end{equation}
and
\begin{multline}\label{ws4}
-\int_0^{\infty} f(X)\varphi(X,0)dX- \int_0^{\infty}\int_0^{\infty} V\varphi_t dtdX+  \int_0^{\infty}c(t)\varphi(0,t)dt
 \\+  \int_0^{\infty}\int_0^{\infty} P(\Gamma)\varphi_X dXdt-
\int_0^{\infty} g(X)a(0)\varphi(X,0)dX \\ -\int_0^{\infty}\int_0^{\infty}  (a(t)\varphi(X,t))_t \Gamma dtdX
+ \int_0^{\infty}\int_0^{\infty}a(t)\varphi_X(X,t)V dXdt=0.
\end{multline}

\end{proposition}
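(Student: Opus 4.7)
The plan is to prove both implications by integration by parts on the first quadrant $[0,\infty)^2$, tracking how the boundary contributions at $t=0$ absorb the initial data, while those at $X=0$ coming from the compatibility equation are annihilated by the imposed condition $\psi(0,t)=0$ and those coming from Newton's law encode the prescribed boundary relation $P(\Gamma(0,t))+a(t)V(0,t)=c(t)$.

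For the forward direction, I would multiply the compatibility equation $\Gamma_t-V_X=0$ by $\psi$ and integrate by parts in both variables: this yields the initial term $-\int g(X)\psi(X,0)\,dX$ from the $t=0$ edge and a boundary piece proportional to $V(0,t)\psi(0,t)$ from the $X=0$ edge, which vanishes by the hypothesis on $\psi$, giving (\ref{ws3}). Multiplying Newton's law $V_t-P(\Gamma)_X=0$ by $\varphi$ and integrating by parts produces $-\int f(X)\varphi(X,0)\,dX-\int\!\!\int V\varphi_t\,dt\,dX+\int\!\!\int P(\Gamma)\varphi_X\,dX\,dt$ together with a boundary contribution $\int P(\Gamma(0,t))\varphi(0,t)\,dt$. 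The boundary relation converts this last integral into $\int c(t)\varphi(0,t)\,dt$ plus a leftover $-\int a(t)V(0,t)\varphi(0,t)\,dt$.

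The main obstacle is rewriting this leftover as the last three summands of (\ref{ws4}). My plan is to apply the fundamental theorem of calculus in $X$ to write
$$-\int_0^\infty a(t)V(0,t)\varphi(0,t)\,dt=\int_0^\infty\!\!\int_0^\infty \bigl(a(t)V(X,t)\varphi(X,t)\bigr)_X\,dX\,dt,$$
then expand the $X$-derivative, substitute $V_X=\Gamma_t$ via the compatibility equation, and integrate by parts in $t$ to shift the derivative off $\Gamma$. This single manipulation produces precisely the initial-data term $-\int a(0)g(X)\varphi(X,0)\,dX$, the mixed term $-\int\!\!\int (a(t)\varphi)_t\,\Gamma\,dt\,dX$, and the term $\int\!\!\int a(t)\varphi_X V\,dX\,dt$, completing (\ref{ws4}).

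For the converse, I first restrict $\psi$ and $\varphi$ to bump functions supported in the open quadrant $(0,\infty)^2$; all boundary contributions then vanish, and reversing the integrations by parts above reduces (\ref{ws3}) and (\ref{ws4}) to $\int\!\!\int(\Gamma_t-V_X)\psi\,dX\,dt=0$ and $\int\!\!\int(V_t-P(\Gamma)_X)\varphi\,dX\,dt=0$, so the fundamental lemma of the calculus of variations yields both PDEs pointwise. With the PDEs in hand, each identity collapses into a single boundary integral when tested against general $\psi,\varphi$: choosing $\psi$ supported near $\{t=0\}$ with $\psi(0,t)=0$ isolates $\int[g(X)-\Gamma(X,0)]\psi(X,0)\,dX=0$, giving $\Gamma(X,0)=g(X)$; a $\varphi$ supported near $\{t=0\}$ and away from the $t$-axis gives $V(X,0)=f(X)$; and a $\varphi$ supported near an interior point of the $t$-axis isolates the boundary relation $P(\Gamma(0,t))+a(t)V(0,t)=c(t)$.
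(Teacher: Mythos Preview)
Your proposal is correct and follows essentially the same route as the paper. The only cosmetic difference is in how you obtain the last three terms of (\ref{ws4}): the paper tests the compatibility equation $\Gamma_t-V_X=0$ directly against the auxiliary function $\chi=a(t)\varphi$ and then adds the result to the identity coming from Newton's law, whereas you arrive at the same computation by writing $-\int a(t)V(0,t)\varphi(0,t)\,dt=\int\!\!\int(aV\varphi)_X\,dX\,dt$, expanding, substituting $V_X=\Gamma_t$, and integrating by parts in $t$---these are the same manipulation read in opposite order. For the converse both you and the paper first use interior test functions to recover the PDEs (noting that (\ref{ws4}) alone yields $V_t-P(\Gamma)_X+a(t)(\Gamma_t-V_X)=0$, so one needs $\Gamma_t-V_X=0$ from (\ref{ws3}) first) and then peel off the initial and boundary data by successively enlarging the supports.
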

\begin{proof}

Indeed, assuming that $(V,\Gamma)$ is a classical solution of IBVP (\ref{ec-p}),(\ref{ec-m1}), we
multiply  the first equation in (\ref{ec-p}) by  $\varphi$, integrating  by parts
and using the initial and boundary conditions (\ref{ec-m1}) we obtain 
 


\begin{equation}\label{ec12}
\begin{split}
-&\int_0^{\infty} f(X)\varphi(X,0)dX-\int_0^{\infty}\int_0^{\infty} V(X,t)\varphi_t(X,t)dtdX+\int_0^{\infty}c(t)\varphi(0,t)dt
\\ -&\int_0^{\infty}a(t)V(0,t)\varphi(0,t)dt+
\int_0^{\infty}\int_0^{\infty} P(\Gamma)\varphi_X(X,t)dXdt=0.
\end{split}
\end{equation}
Similarly multiplying the second equation in (\ref{ec-p}) by a test function  $\chi$ and integrating by parts results in
\begin{multline}\label{ec13}
-\int_0^{\infty} g(X)\chi(X,0)dX -\int_0 ^{\infty} \int_0 ^{\infty}\Gamma(X,t)\chi_t(X,t)dtdX +\int_0^{\infty} V(0,t) \chi(0,t)dt\\
+\int_0 ^{\infty}\int_0 ^{\infty}V(X,t)\chi_X(X,t)dXdt=0.
\end{multline}
For    $\chi=\psi$ the equation  (\ref{ec13}) is equivalent to
\begin{multline*} 
-\int_0^{\infty} g(X)\psi(X,0)dX -\int_0 ^{\infty} \int_0 ^{\infty}\Gamma(X,t)\psi_t(X,t)dtdX \\
+\int_0 ^{\infty}\int_0 ^{\infty}V(X,t)\psi_X(X,t)dXdt=0,
\end{multline*}
 which is (\ref{ws3}).\\
 Next, assuming that  $\chi(X,t)=a(t)\varphi(X,t)$  the equation (\ref{ec13}) becomes
\begin{multline}\label{ec15}
-\int_0^{\infty} g(X)a(0)\varphi(X,0)dX-\int_0^{\infty}\int_0^{\infty} \Gamma(X,t)(a(t)\varphi(X,t))_t dtdX+\\
\int_0^{\infty}a(t) V(0,t)\varphi(0,t)dt+\int_0^{\infty}\int_0^{\infty}V(X,t)(a(t)\varphi(X,t))_X dXdt=0.
\end{multline}
Now, adding  (\ref{ec12}) to (\ref{ec15}), we get
\begin{multline*}
-\int_0^{\infty} f(X)\varphi(X,0)dx- \int_0^{\infty}\int_0^{\infty} V(X,t)\varphi_t(X,t) dtdX+  \int_0^{\infty}c(t)\varphi(0,t)dt
\\ +  \int_0^{\infty}\int_0^{\infty} P(\Gamma)\varphi_X(X,t)dXdt -
\int_0^{\infty} g(X)a(0)\varphi(X,0)dX \\ -\int_0^{\infty}\int_0^{\infty} \Gamma(X,t)(a(t)\varphi(X,t))_t dtdX
 + \int_0^{\infty}\int_0^{\infty}V(X,t)(a(t)\varphi(X,t))_X dXdt=0,
\end{multline*}
which is (\ref{ws4}).\\
Next,  it remains to verify  that if $(V,\Gamma)$ satisfies  (\ref{ws3}) and (\ref{ws4}) for all $\varphi$ and $\psi$, then
$(V,\Gamma)$ is a classical solution of the IBVP  (\ref{ec-p}), (\ref{ec-m1}).\\
Indeed, integrating by parts the equation (\ref{ws3}) we obtain
\begin{equation}\label{ec17}
\int_0^{\infty}\psi(X,0)(\Gamma(X,0)-g(X))dX+\int_0^{\infty}\int_0^{\infty} \psi(X,t)(\Gamma_t(X,t)-V_X(X,t))dXdt=0.
\end{equation}
If in addition
 $\psi$ has compact support in $ (0,\infty)\times (0,\infty)$, we obtain
\[\int_0^{\infty}\int_0^{\infty} \psi(X,t)(\Gamma_t(X,t)-V_X(X,t))dXdt=0,\]
 for all such test functions $\psi$. Therefore we conclude
\begin{equation} \label{ec19}
\Gamma_t(X,t)-V_X(X,t)=0.
\end{equation}
Now since  $\Gamma_t(X,t)-V_X(X,t)=0$, then  (\ref{ec17}) reduces to
\[\int_0^{\infty}\psi(X,0)(\Gamma(X,0)-g(X))dX=0, \]
and this holds for all function  $\psi$, with compact support in $[0,\infty)\times [0,\infty)$, containing points on the $X$-axis, and subject to
$\psi(0,t)=0$ we get 
\begin{equation}\label{ec20}
\Gamma(X,0)=g(X).
\end{equation}
Similarly integrating by parts the equation  (\ref{ws4}), we obtain
\begin{multline}\label{ec18}
\int_0^{\infty}\varphi(X,0)(V(X,0)-f(X))dX+\int_0^{\infty}\varphi(0,t)(c(t)-P(\Gamma(0,t))-V(0,t)a(t))dt\\+
\int_0^{\infty}\int_0^{\infty} \varphi(X,t)(V_t(X,t)-(P(\Gamma))_X)dXdt + \int_0^{\infty}a(0)\varphi(X,0)(\Gamma(X,0)-g(X))dX\\
+ \int_0^{\infty}\int_0^{\infty}a(t)\varphi(X,t)(\Gamma_t(X,t)-V_X(X,t))dXdt=0,
\end{multline}
which because of   (\ref{ec19}) and (\ref{ec20})  becomes
\begin{multline}\label{ec21}
\int_0^{\infty}\varphi(X,0)(V(X,0)-f(X))dX+\int_0^{\infty}\varphi(0,t)(c(t)-P(\Gamma(0,t))-V(0,t)a(t))dt\\+
\int_0^{\infty}\int_0^{\infty} \varphi(X,t)(V_t(X,t)-(P(\Gamma))_X)dXdt=0.
\end{multline}
If in addition  $\varphi$ has compact support in  $(0,\infty)\times (0,\infty)$, we obtain
\[\int_0^{\infty}\int_0^{\infty} \varphi(X,t)(V_t(X,t)-(P(\Gamma))_X)dXdt=0\]
for all such test functions $\varphi$. Therefore  we conclude that 
$V_t(X,t)-(P(\Gamma))_X=0.$
Now if  $V_t(X,t)-(P(\Gamma))_X=0$, then  the equation (\ref{ec21}) becomes

\begin{equation}\label{ec22}
\int_0^{\infty}\varphi(X,0)(V(X,0)-f(X))dX+\int_0^{\infty}\varphi(0,t)(c(t)-P(\Gamma(0,t))-V(0,t)a(t))dt=0.
\end{equation}
Assuming that $\varphi$ has compact support in $[0,\infty)\times [0,\infty)$ containing points on the  $X$-axis, but not on
the $t$-axis, we get $\int_0^{\infty}\varphi(X,0)(V(X,0)-f(X))dX=0,$
thus $V(X,0)-f(X)=0,$
then   (\ref{ec22}) reduces to
\[ \int_0^{\infty}\varphi(0,t)(c(t)-P(\Gamma(0,t))-V(0,t)a(t))dt=0.\]
Since this holds for all function $\varphi$ with compact support in  $[0,\infty)\times [0,\infty)$, and  containing points on the  $t$-axis, then it follows
\[P(\Gamma(0,t))+a(t)V(0,t)=c(t).\]
Therefore we conclude that  $(V,\Gamma)$ is a classical solution for the IVBP  (\ref{ec-p}), (\ref{ec-m1}).
\end{proof}
\textit{Proposition} \ref{propo2} suggests the following definition:
\begin{definition}
Let $f,g$ and $c \in L^{\infty}([0,\infty))$, and $a \in C^1([0,\infty))$. We say that the pair $(V,\Gamma)\in L^\infty ([0,\infty)^2) $, such that
$P(\Gamma(X,t)) \in L^{\infty}([0,\infty)^2)$, is a weak solution
of IBVP (\ref{ec-p}),(\ref{ec-m1}), provided  (\ref{ws3}) and (\ref{ws4}) hold for all test
functions $\varphi$ and $\psi$ with $\psi$ restricted by $\psi(0,t)=0.$
\end{definition}

Next, similarly we have an  proposition analogous to the \textit{Proposition }\ref{propo2}, for the  IBVP  (\ref{ec-p}),(\ref{ec-m2}).
\begin{proposition}\label{propo3}
Let $f, g,b$ and $c$ be $C^1$ functions  on $[0,\infty)$, and let $V(X,t)$, $\Gamma(X,t)$ be $C^1$ functions   on
$[0,\infty)^2$, such that $P(\Gamma(X,t))$ is $C^1$ on $[0,\infty)^2$. Then the pair $(V,\Gamma)$ is a classical solution of IBVP (\ref{ec-p}),(\ref{ec-m2}), if and only if,
for all  $\varphi$ and $\psi$, with $\varphi$ satisfying the condition
 $\varphi(0,t)=0$ it holds

\begin{multline} \label{ws5}
-\int_0^{\infty} f(X)\varphi(X,0)dX-
\int_0^{\infty}\int_0^{\infty} V(X,t)\varphi_t(X,t)dtdX\\
+\int_0^{\infty}\int_0^{\infty} P(\Gamma)\varphi_X(X,t)dXdt=0
\end{multline}
and
\begin{multline}\label{ws6}
-\int_0^{\infty} g(X)\psi(X,0)dX -\int_0 ^{\infty} \int_0 ^{\infty}\Gamma(X,t)\psi_t(X,t)dtdX +\int_0^{\infty}c(t)\psi(0,t)dt\\+
\int_0^{\infty}\int_0^{\infty}V(X,t)\psi_X(X,t)dXdt -\int_0^{\infty}f(X)b(0)\psi(X,0)dX\\- \int_0^{\infty}\int_0^{\infty}(b(t)\psi(X,t))_t V dtdX
+\int_0^{\infty}\int_0^{\infty}b(t)\psi_X P(\Gamma) dXdt=0.
\end{multline}

\end{proposition}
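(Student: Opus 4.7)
The plan is to mirror the proof of \textit{Proposition }\ref{propo2}, interchanging the roles of the two equations in the p-system (\ref{ec-p}) and of the two test functions. Because the boundary condition in (\ref{ec-m2}) expresses $V(0,t)$ in terms of $P(\Gamma(0,t))$ (rather than the reverse, as in (\ref{ec-m1})), it is now Newton's law $V_t-(P(\Gamma))_X=0$ that, tested against a generic $\varphi$ subject to $\varphi(0,t)=0$, produces (\ref{ws5}); while the compatibility equation $\Gamma_t-V_X=0$, combined with a suitable specialization of the first, produces (\ref{ws6}).

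For the forward implication I would first multiply $V_t-(P(\Gamma))_X=0$ by an arbitrary $C^1$ test function $\varphi$ of compact support and integrate by parts over $[0,\infty)^2$. Using $V(X,0)=f(X)$, this yields an identity containing exactly one boundary integral at $X=0$, namely $\int_0^\infty P(\Gamma(0,t))\varphi(0,t)\,dt$; imposing $\varphi(0,t)=0$ removes that term and leaves (\ref{ws5}). Next I would multiply $\Gamma_t-V_X=0$ by $\psi$, integrate by parts, use $\Gamma(X,0)=g(X)$, and substitute $V(0,t)=c(t)-b(t)P(\Gamma(0,t))$ from the boundary condition, obtaining an intermediate identity. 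Finally, in the identity coming from the first equation before imposing $\varphi(0,t)=0$, I would specialize $\varphi(X,t):=b(t)\psi(X,t)$; adding the result to the intermediate identity, the two boundary contributions involving $b(t)P(\Gamma(0,t))\psi(0,t)$ appear with opposite signs and cancel, leaving precisely (\ref{ws6}).

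For the converse, assume (\ref{ws5}) and (\ref{ws6}) hold for every admissible pair. Taking $\varphi$ compactly supported in the open quadrant $(0,\infty)^2$ in (\ref{ws5}) and integrating by parts recovers $V_t=(P(\Gamma))_X$; then allowing $\varphi$ to meet the $X$-axis but not the $t$-axis forces $V(X,0)=f(X)$. With these two identities in hand, I would integrate the term $-\int_0^\infty \int_0^\infty V(b(t)\psi)_t\,dtdX$ in (\ref{ws6}) by parts: its contributions cancel the explicit $-\int_0^\infty f(X)b(0)\psi(X,0)\,dX$ and the $+\int_0^\infty \int_0^\infty b(t)\psi_X P(\Gamma)\,dXdt$, and produce an additional $-\int_0^\infty b(t)P(\Gamma(0,t))\psi(0,t)\,dt$. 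Reversing the remaining integrations by parts then reduces the surviving identity to
\begin{multline*}
\int_0^\infty (\Gamma(X,0)-g(X))\psi(X,0)\,dX + \int_0^\infty \bigl(c(t) - V(0,t) - b(t)P(\Gamma(0,t))\bigr)\psi(0,t)\,dt \\ + \int_0^\infty \int_0^\infty (\Gamma_t-V_X)\psi\,dXdt=0,
\end{multline*}
and varying the support of $\psi$ exactly as in the proof of \textit{Proposition }\ref{propo2} yields in turn $\Gamma_t=V_X$, $\Gamma(X,0)=g(X)$, and the boundary relation $V(0,t)+b(t)P(\Gamma(0,t))=c(t)$.

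The difficulty here is not conceptual but bookkeeping: one must carefully track the boundary contributions at $X=0$ produced by the substitution $\varphi=b(t)\psi$, so that the $b(t)P(\Gamma(0,t))\psi(0,t)$ integrals cancel correctly both in assembling (\ref{ws6}) (forward direction) and in recovering the boundary relation (reverse direction). This cancellation is the structural analog of what produced (\ref{ws4}) in \textit{Proposition }\ref{propo2}, and it is precisely what dictates that the right test-function restriction in this setting is $\varphi(0,t)=0$ rather than $\psi(0,t)=0$.
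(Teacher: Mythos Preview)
Your proposal is correct and follows exactly the route the paper intends: the paper omits the proof of this proposition, stating only that it is analogous to \textit{Proposition}~\ref{propo2}, and your argument is precisely that analogy—swapping the roles of the two p-system equations, restricting $\varphi$ (rather than $\psi$) at $X=0$, and using the specialization $\varphi=b(t)\psi$ in the identity from Newton's law to cancel the $b(t)P(\Gamma(0,t))\psi(0,t)$ boundary term when assembling (\ref{ws6}). The bookkeeping you describe is accurate in both directions.
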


Proposition \ref{propo3}, suggests the following definition:
\begin{definition}
Let $f,g, c \in L^{\infty}([0,\infty))$, and $b\in C^1([0,\infty))$. We say that the pair $(V,\Gamma)\in L^\infty ([0,\infty)^2)$, such that
$P(\Gamma(X,t)) \in L^{\infty}([0,\infty)^2)$, is a weak solution
of the IBVP (\ref{ec-p}),(\ref{ec-m2}), provided  (\ref{ws5}) and (\ref{ws6}) hold for all test
functions $\varphi$ and $\psi$ with $\varphi$ restricted by  $\varphi(0,t)=0$.
\end{definition}

\begin{remark}
Note that setting $b(t)=0$ and $c(t)=h(t)$ in  (\ref{ws6}) we obtain  (\ref{ws2}). This is consistent  with the
fact that the initial and boundary conditions (\ref{ec-m2}) are equivalent to (\ref{ec3-2}) for this choice of $b$ and $c$.
\end{remark}

%
Now, we consider a p-system  (\ref{ec-p}), with the following initial and boundary conditions:
\begin{equation}\label{ec3-2}
\begin{cases}
V(X,0)=f(X)   \\
 \Gamma(X,0)=g(X)\\
 V(0,t)=h(t).
\end{cases}
\end{equation}
\
\

Notice that (\ref{ec3-2}) is a particular case of the condition (\ref{ec-m2}).


\begin{proposition} \label{propo1}
Let $f, g$ and $h$ be $C^1$ functions  on $[0,\infty)$, and let $V(X,t)$, $\Gamma(X,t)$
 be $C^1$ functions   on
$[0,\infty)^2$, such that $P(\Gamma(X,t))$  is $C^1$ on $[0,\infty)^2$. Then  the pair $(V,\Gamma)$ is a classical solution
of IBVP (\ref{ec-p}), (\ref{ec3-2}), if and only if for all $\varphi$ and $\psi$, with $\varphi$ satisfying the condition   $\varphi(0,t)=0,$ it holds

\begin{equation}\label{ws1}
\int_0^\infty \int_0^\infty (V\varphi_t-P(\Gamma)\varphi_X)dtdX+\int_0^\infty f(X)\varphi(X,0)dX=0
\end{equation}

and
\begin{equation}\label{ws2}
\int_0^\infty \int_0^\infty (\Gamma\psi_t-V\psi_X)dtdX-\int_0^\infty h(t)\psi(0,t)dt+\int_0^\infty g(X)\psi(X,0)dt=0.
\end{equation}
\end{proposition}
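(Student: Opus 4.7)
The plan is to follow the same two-direction template used in Propositions \ref{propo2} and \ref{propo3}, taking advantage of the observation in the Remark that (\ref{ec3-2}) is the specialization $b\equiv 0$, $c=h$ of (\ref{ec-m2}). One could in principle just invoke Proposition \ref{propo3} with $b=0$, but it is cleaner to give the short direct argument, since only two integration-by-parts steps are needed.

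For the forward direction, I would assume that $(V,\Gamma)$ is a classical $C^1$ solution of (\ref{ec-p}),(\ref{ec3-2}). Multiply the first equation $V_t-(P(\Gamma))_X=0$ by an arbitrary compactly-supported test function $\varphi\in C^1([0,\infty)^2)$ satisfying $\varphi(0,t)=0$, and integrate over $[0,\infty)^2$. Integration by parts in $t$ produces the boundary term $-\int_0^\infty V(X,0)\varphi(X,0)\,dX=-\int_0^\infty f(X)\varphi(X,0)\,dX$; integration by parts in $X$ produces a boundary term $\int_0^\infty P(\Gamma(0,t))\varphi(0,t)\,dt$ which vanishes due to $\varphi(0,t)=0$. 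Rearranging yields (\ref{ws1}). Similarly, multiply $\Gamma_t-V_X=0$ by a test function $\psi$ (no boundary restriction) and integrate by parts. The $t$-boundary gives $-\int_0^\infty g(X)\psi(X,0)\,dX$, while the $X$-boundary gives $\int_0^\infty V(0,t)\psi(0,t)\,dt=\int_0^\infty h(t)\psi(0,t)\,dt$ by the boundary condition. Rearranging yields (\ref{ws2}).

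For the converse, I would proceed in the familiar three-layer way. First, restrict to test functions $\varphi,\psi$ with compact support in the open quadrant $(0,\infty)\times(0,\infty)$; integrating (\ref{ws1}) and (\ref{ws2}) by parts backwards and invoking the fundamental lemma of the calculus of variations on each recovers the PDEs $V_t-(P(\Gamma))_X=0$ and $\Gamma_t-V_X=0$. Second, allow the support of $\varphi$ to meet the $X$-axis but not the $t$-axis (still keeping $\varphi(0,t)=0$ automatically on the $t$-axis); integration by parts in (\ref{ws1}), together with the just-derived PDE, reduces it to $\int_0^\infty \varphi(X,0)(V(X,0)-f(X))\,dX=0$, from which $V(X,0)=f(X)$ follows. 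Applying the analogous argument to (\ref{ws2}) gives $\Gamma(X,0)=g(X)$. Third, allow $\psi$ to meet the $t$-axis; after using the PDE and the already-established initial condition, (\ref{ws2}) collapses to $\int_0^\infty \psi(0,t)(V(0,t)-h(t))\,dt=0$, which yields $V(0,t)=h(t)$.

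The only genuinely careful point (and thus the main potential obstacle) is bookkeeping the boundary contributions in the integration by parts: one must ensure that the condition $\varphi(0,t)=0$ is precisely what is needed to kill the $P(\Gamma(0,t))\varphi(0,t)$ boundary term in the first equation (where the boundary datum is not prescribed for $P(\Gamma)$), while \emph{not} being imposed on $\psi$, so that $\psi(0,t)$ is free to detect the boundary datum $V(0,t)=h(t)$ in the second equation. Once this asymmetry is properly installed, everything else is a verbatim repetition of the arguments already carried out in the proof of Proposition \ref{propo2}.
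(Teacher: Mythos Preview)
Your proposal is correct and follows essentially the same approach as the paper. The paper does not write out an explicit proof of Proposition~\ref{propo1}; it relies on the detailed proof given for Proposition~\ref{propo2} and the remark that (\ref{ec3-2}) is the specialization $b\equiv 0$, $c=h$ of (\ref{ec-m2}), so your direct integration-by-parts argument (with the asymmetric restriction $\varphi(0,t)=0$ but $\psi$ unrestricted) together with the three-layer converse is precisely the intended argument.
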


\textit{Proposition} \ref{propo1}, suggests the following definition:
\begin{definition}\label{def-propo1}
Let $f,g,h \in L^{\infty}([0,\infty))$.
We say that the pair 
$(V,\Gamma) \in L^\infty ([0,\infty)^2)$, such that $P(\Gamma(X,t)) \in L^{\infty}([0,\infty)^2)$, is a weak solution
of IBVP (\ref{ec-p}), (\ref{ec3-2}), provided  (\ref{ws1}) and (\ref{ws2}) hold for all test
functions $\varphi$ and $\psi$, with  $\varphi$ restricted by $\varphi(0,t)=0$.
\end{definition}
\begin{remark}
A interesting question arises in what sense $(V,\Gamma)$ satisfies (\ref{ec3-2}). To answer that question we need to prove that the traces, \cite{RINAR}, \cite{EVA},
 of $(V,\Gamma)$ on the positive part of the $X$ axes and of $V$ on the positive part of the $t$ axes exist and  are equal
 to $f(X)$, $g(X)$ and $h(t)$ respectively. That problem seems to be non trivial. Its solution does not appear in the revised literature.
\end{remark}

\begin{remark}
A similar argument shows that by replacing $\varphi(0,t)=0$ by $\psi(0,t)=0$, in the  \textit{Definition} \ref{def-propo1}, we can arrive to an analogous definition of  a weak solution
 of the system (\ref{ec-p}), with the following initial and boundary conditions
\begin{equation}\label{ec8}
\begin{cases}
V(X,0)=f(X)   \\
 \Gamma(X,0)=g(X)\\
 P(\Gamma(0,t))=h(t).
\end{cases}
\end{equation}
\end{remark}



\subsection{An initial and  boundary value problem, $IBVP_{V_0}$, for a p-system}

  We consider a particular case of an  IBVP, (\ref{ec-p}), (\ref{ec3-2}),  denoted further by $IBVP_{V_{0}}.$

\begin{equation} \label{ec281}
\begin{cases}
V(X,0)=-V_0 \ \   \\
 \Gamma(X,0)=0\\
 V(0,t)=0.
\end{cases}
\end{equation}

\begin{theorem}\label{theoremA}

The pair $(V,\Gamma)$, given by the equations (\ref{ec-0206})
and (\ref{ec-0207}), is a weak solution of $IBVP_{V_0}$
(\ref{ec-p}),(\ref{ec281}),
\begin{equation} \label{ec-0206}
V(X,t)=\left\{
         \begin{array}{ll}
           -V_0, & \hbox{if} \ X>\sigma t \\
           0, & \hbox{if} \ X<\sigma t
         \end{array}
       \right.
\end{equation}

\begin{equation} \label{ec-0207}
\Gamma(X,t)=
\begin{cases}
0, & \hbox{if} \ X>\sigma t \\
           \Gamma_l, & \hbox{if} \ X<\sigma t,
\end{cases}
\end{equation}
where $\Gamma_l$ and $\sigma$ are determined by the Rankine-Hugoniot conditions, that
is:
\begin{equation}\label{RHnonlinear}
\begin{split}
\Gamma_l P(\Gamma_l)&=V_0^2\\
\sigma &=-\frac{V_0}{\Gamma_l}.
\end{split}
\end{equation}
\end{theorem}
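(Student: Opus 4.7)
The plan is to verify the two weak-formulation identities of \textit{Definition} \ref{def-propo1} directly, exploiting the fact that the proposed pair $(V,\Gamma)$ is piecewise constant, with a single jump across the line $X=\sigma t$, so that the integrals in (\ref{ws1}) and (\ref{ws2}) collapse to one-dimensional boundary integrals by Fubini's theorem. The initial and boundary data for $IBVP_{V_0}$ are $f(X)=-V_0$, $g(X)=0$, $h(t)=0$, and $L^{\infty}$-regularity of the pair and of $P(\Gamma)$ is automatic since they take only two values each.

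First, I would fix arbitrary test functions $\varphi,\psi$ of compact support (with $\varphi(0,t)=0$) and split the quadrant $[0,\infty)^2$ along the shock into $A=\{X>\sigma t\}$ (where $V=-V_0$, $\Gamma=0$, so $P(\Gamma)=0$) and $B=\{X<\sigma t\}$ (where $V=0$, $\Gamma=\Gamma_l$, so $P(\Gamma)=P(\Gamma_l)$). On $A$ I would slice by $X$, letting $t$ run from $0$ to $X/\sigma$; on $B$ I would slice by $t$, letting $X$ run from $0$ to $\sigma t$. Since $V$, $\Gamma$, and $P(\Gamma)$ are constant on each region, the inner integration reduces each piece to a boundary evaluation on the $X$-axis, the $t$-axis, or the shock line. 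I would then collect these contributions: on the positive $X$-axis (contained in $\overline{A}$) the values $V=-V_0$, $\Gamma=0$ match the initial data, so those boundary terms cancel against $\int_0^\infty f(X)\varphi(X,0)\,dX$ and $\int_0^\infty g(X)\psi(X,0)\,dX$; on the positive $t$-axis (contained in $\overline{B}$) the contribution to (\ref{ws1}) carries the factor $\varphi(0,t)=0$, while in (\ref{ws2}) the integrand $V\psi_X$ already vanishes because $V=0$ on $B$; and at spatial and temporal infinity, compact support of the test functions kills the remaining contribution.

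What remains are the two shock-line contributions. After the substitution $X=\sigma s$ along $\{(\sigma s,s):s\ge 0\}$, a direct computation turns (\ref{ws1}) into $-[\sigma V_0+P(\Gamma_l)]\int_0^\infty\varphi(\sigma s,s)\,ds$ and (\ref{ws2}) into $-[V_0+\sigma\Gamma_l]\int_0^\infty\psi(\sigma s,s)\,ds$. Both coefficients vanish by the Rankine--Hugoniot relations (\ref{RHnonlinear}): the identity $\sigma=-V_0/\Gamma_l$ gives $V_0+\sigma\Gamma_l=0$, and combined with $\Gamma_lP(\Gamma_l)=V_0^2$ it yields $P(\Gamma_l)=V_0^2/\Gamma_l=-\sigma V_0$. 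Since $\varphi$ and $\psi$ were arbitrary, both weak identities hold, so $(V,\Gamma)$ is a weak solution. The only real obstacle is organizational: one must be careful with the orientation of each boundary piece and with the role of the restriction $\varphi(0,t)=0$ in the definition, but no deeper analytic tool beyond Fubini is required.
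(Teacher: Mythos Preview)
Your proposal is correct and follows essentially the same route as the paper: split the quadrant along the shock, use the piecewise constant values to reduce the double integrals to boundary terms, cancel the $X$-axis terms against the initial data and the $t$-axis terms via $\varphi(0,t)=0$ (respectively $V=0$ on $B$), and show the remaining shock-line integrals vanish by the Rankine--Hugoniot relations. The only point to make explicit is that you are tacitly using $P(0)=0$ when you write ``$P(\Gamma)=0$ on $A$''; the paper states this assumption at the outset of its proof.
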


We observe that the system  (\ref{RHnonlinear}) has an unique solution $(\sigma, \Gamma_l)$ provided the first equation has an unique solution
for $\Gamma_l$. We  denote such solution by $S(V_0)$ and because of the relation between $V_0$ and $\Gamma_l$ by $S(\Gamma_l)$ as well.\\
Concerning solvability of the first equation,  we notice the following fact.

\begin{lemma} \label{unicidad}
Let $P(0)=0$, $\lim\limits_{\Gamma\rightarrow -1^{+}}P(\Gamma)=-\infty $ and for all $\Gamma \in (-1,0)$,
 $P'(\Gamma)>0$. Then for each $V_0>0$ there exists an  unique $\Gamma_l \in (-1,0)$ such that $\Gamma_l P(\Gamma_l)=V_0^2$.

\end{lemma}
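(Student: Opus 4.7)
The plan is to study the function $h(\Gamma) = \Gamma P(\Gamma)$ on the interval $(-1,0)$ and show that it is a continuous strictly monotone bijection onto $(0,\infty)$, which immediately yields existence and uniqueness of $\Gamma_l$ solving $h(\Gamma_l) = V_0^2$ for any $V_0 > 0$.

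First I would pin down the sign of $P$ on $(-1,0)$. Since $P'(\Gamma) > 0$ there and $P(0) = 0$, integrating (or just applying monotonicity) gives $P(\Gamma) < 0$ for every $\Gamma \in (-1,0)$. Consequently $h(\Gamma) = \Gamma P(\Gamma) > 0$ on the whole interval, which is consistent with the target value $V_0^2 > 0$.

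Next I would show strict monotonicity by computing $h'(\Gamma) = P(\Gamma) + \Gamma P'(\Gamma)$. On $(-1,0)$ the first summand is negative by the previous step and the second summand is negative because $\Gamma < 0$ and $P'(\Gamma) > 0$. Hence $h'(\Gamma) < 0$ on $(-1,0)$, so $h$ is strictly decreasing. Combined with continuity of $h$ (inherited from the differentiability of $P$), this reduces the problem to checking the two endpoint limits.

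Finally I would verify the boundary behavior: as $\Gamma \to 0^-$, continuity of $P$ at $0$ together with $P(0) = 0$ gives $h(\Gamma) \to 0$, and as $\Gamma \to -1^+$ the hypothesis $P(\Gamma) \to -\infty$ together with $\Gamma \to -1$ gives $h(\Gamma) \to +\infty$. By the intermediate value theorem applied to the strictly decreasing continuous function $h$, the map $h:(-1,0)\to(0,\infty)$ is a bijection, and so for each $V_0 > 0$ there is exactly one $\Gamma_l \in (-1,0)$ with $\Gamma_l P(\Gamma_l) = V_0^2$. There is no real obstacle here beyond the sign bookkeeping; the only place that requires the full strength of the hypotheses is the monotonicity argument, where one uses both $P' > 0$ and $P < 0$ simultaneously.
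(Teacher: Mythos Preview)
Your proposal is correct and follows essentially the same approach as the paper: define $H(\Gamma)=\Gamma P(\Gamma)$, show $H'(\Gamma)=P(\Gamma)+\Gamma P'(\Gamma)<0$ on $(-1,0)$, and use the endpoint behavior $H(0)=0$ and $H(\Gamma)\to+\infty$ as $\Gamma\to -1^+$ to conclude. Your write-up is in fact more careful than the paper's, since you explicitly justify $P(\Gamma)<0$ on $(-1,0)$ and spell out both endpoint limits, whereas the paper leaves these implicit.
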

\begin{proof}
Let $H(\Gamma):=\Gamma P(\Gamma).$ Then $H'(\Gamma)=P(\Gamma)+\Gamma P'(\Gamma)<0$ 
for all
$\Gamma \in (-1,0)$ therefore $H(\Gamma)$
is decreasing. Since $H(0)=0$ and  $\lim\limits_{\Gamma\rightarrow -1^{+}}P(\Gamma)=-\infty, $ thus, we conclude that for each $V_0>0$, there exists an unique
$\Gamma_l \in (-1,0)$ such that $\Gamma_l P(\Gamma_l)=V_0^2$.
\end{proof}
\textbf{Proof of Theorem \ref{theoremA}}

  We verify that $S(\Gamma_l)$ is indeed a weak solution of $IBVP_{V_0}$ (\ref{ec-p}),(\ref{ec281}), i.e. we verify that $S(\Gamma_l)$ satisfies the
equations (\ref{ws1}) and (\ref{ws2}) for all test functions $\varphi$ and $\psi$, with $\varphi$ restricted by  the condition $\varphi(0,t)=0$.
We also assume that $P(0)=0.$
Here $f(X)=-V_0$, $g(X)=0$ and $h(t)=0$ in (\ref{ec3-2}).
 We verify (\ref{ws1}) first. Its left-hand sized is:
\begin{align*}
&\int_0^\infty \int_0^\infty (V\varphi_t-P(\Gamma)\varphi_X)dtdX+\int_0^\infty f(X)\varphi(X,0)dX \\
&=-V_0 \int_0^\infty \int_0^{X/\sigma} \varphi_t dt dX + \int_0^\infty \int_{X/\sigma}^\infty 0\cdot \varphi_t dtdX-P(\Gamma_l) \int_0^\infty \int_0^{\sigma t}\varphi_X dXdt\\&- P(0)\int_0^\infty \int_{\sigma t}^\infty  \varphi_X dXdt- V_0\int_0^\infty \varphi(X,0)dX\\
&=-V_0\int_0^\infty  \varphi(X,X/\sigma)dX+V_0\int_0^\infty \varphi(X,0)dX-P(\Gamma_l)\int_0^\infty \varphi(\sigma t,t)dt\\
&+P(\Gamma_l) \int_0^\infty \varphi(0,t)dt-V_0\int_0^\infty \varphi(X,0)dX\\
&=-V_0\int_0^\infty  \varphi(X,X/\sigma)dX -\frac{P(\Gamma_l)}{\sigma}\int_0^\infty  \varphi(X,X/\sigma)dX,
\end{align*}
which is zero due to   (\ref{RHnonlinear}), so that (\ref{ws1}) holds.\\
Next, for the left-hand sized of (\ref{ws2}) we have:
\begin{align*}
&\int_0^\infty\int_0^\infty (\Gamma \psi_t-V\psi_X)dtdX\\
&=\int_0^\infty\int_0^{X/\sigma} 0\cdot \psi_t dtdX+ \Gamma_l\int_0^\infty\int_{X/\sigma}^\infty  \psi_t dtdX-\int_0^\infty\int_0^{\sigma t} 0\cdot \psi_X dXdt\\
& + V_0 \int_0^\infty\int_{\sigma t}^\infty  \psi_X dXdt\\
&=\Gamma_l\int_0^\infty\int_{X/\sigma}^\infty  \psi_t dtdX +V_0 \int_0^\infty\int_{\sigma t}^\infty  \psi_X dXdt\\
&=-\Gamma_l\int_0^\infty  \psi(X,X/\sigma) dX-\frac{V_0}{\sigma}\int_0^\infty  \psi(X,X/\sigma)dX,
\end{align*}

which is zero due to (\ref{RHnonlinear}) so that (\ref{ws2}) holds.

%

%

\section{Entropy condition and entropy solution for a p-system}\label{c5}

 The 
 entropy/entropy-flux pair for a p-system is a pair of real valued $C^2(\mathbb{R}^2)$ functions
$\Phi(V,\Gamma)$ and $\Psi(V,\Gamma)$, where $\Phi$ is  convex, and  such that
\begin{equation}\label{flux}
D\Phi(V,\Gamma)DF(V,\Gamma)=D\Psi(V,\Gamma)
\end{equation}
where $F(V,\Gamma)=(-P(\Gamma),-V)$. Working out that condition one obtains

\begin{equation}\label{ec-6}
\begin{split}
\Psi_V&=-\Phi_\Gamma\\
\Psi_\Gamma &=-P'(\Gamma)\Phi_V.
\end{split}
\end{equation}
Now, the integrability condition of the system (\ref{ec-6}) for $\Psi$ is
\begin{equation}\label{integrability}
\Phi_{\Gamma\Gamma}-P'(\Gamma)\Phi_{VV}=0.
\end{equation}
Given a  convex function $\Phi$ that fulfills this equation we can obtain $\Psi$ by solving the system (\ref{ec-6}).

  \begin{definition}\label{entropia2}
A weak solution $V(X,t)$, $\Gamma(X,t)$ of an IBVP, is an entropy solution provided for each nonnegative $\varphi\in C^{\infty}_0((0,\infty)\times(0,\infty))$ and for each entropy/entropy-flux pair $\Phi,\Psi$ it holds
\begin{equation}\label{ec-7}
\int_0^\infty \int_0^\infty [\Phi(V,\Gamma)\varphi_t(X,t)+\Psi(V,\Gamma)\varphi_X(X,t)]dXdt\geq 0.
\end{equation}
We refer to (\ref{ec-7}) as the entropy condition corresponding to $(\Phi,\Psi)$.
\end{definition}
\begin{remark} \label{trivial-sol}
If a trivial solution $(V,\Gamma)=(0,0)$ is a solution of an IBVP then it is an entropy solution.
\end{remark}

The following \textit{Proposition} translates the entropy condition (\ref{ec-7}) into a jump condition for
piecewise continuous weak solutions.
  \begin{proposition}
  Suppose that $\mathbf{u}=(V,\Gamma)$ is a piecewise continuous weak solution of  (\ref{c-law2}) that satisfies the entropy condition corresponding
  to $(\Phi,\Psi)$. Suppose
   $\mathbf{u}$ has a jump discontinuity along a shock curve with slope  $\sigma$. Then

  \begin{equation}\label{jump}
   \sigma [\Phi(\mathbf{u})]-[\Psi(\mathbf{u})]\geq 0.
   \end{equation}
   We call (\ref{jump}) the entropy jump condition corresponding to $(\Phi,\Psi)$.
  \end{proposition}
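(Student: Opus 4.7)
The plan is to localize a nonnegative test function $\varphi$ near an interior point of the shock curve, then apply Green's theorem separately on each side of the curve, so that the entropy inequality (\ref{ec-7}) reduces to a line integral over the shock whose integrand is exactly $\sigma[\Phi(\mathbf{u})]-[\Psi(\mathbf{u})]$.

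First I would fix an arbitrary point $(X_{0},t_{0})$ on the shock curve lying in the open first quadrant, and choose $\varphi\in C^{\infty}_{0}((0,\infty)\times(0,\infty))$ nonnegative, supported in a small open disk $D$ centered at $(X_{0},t_{0})$, chosen small enough that $D$ meets no other singularity of $\mathbf{u}$ and is split by the shock curve into two open subregions $D_{-}$ and $D_{+}$. On each $D_{\pm}$ the weak solution $\mathbf{u}$ is classical, so by the chain rule applied to $\mathbf{u}_{t}+F(\mathbf{u})_{X}=0$ together with the entropy/entropy-flux relation (\ref{flux}) one has the pointwise identity $\Phi(\mathbf{u})_{t}+\Psi(\mathbf{u})_{X}=0$ on $D_{\pm}$.

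Next I would rewrite the integrand in (\ref{ec-7}) as
\[
\Phi\,\varphi_{t}+\Psi\,\varphi_{X}=\partial_{t}(\Phi\,\varphi)+\partial_{X}(\Psi\,\varphi)-\varphi\,(\Phi_{t}+\Psi_{X}),
\]
apply Green's theorem on each of $D_{-}$ and $D_{+}$, and use the interior identity above to drop the non-divergence terms. Since $\varphi$ vanishes near $\partial D$, the only nonzero boundary contributions come from the shock curve, traversed with opposite orientations from the two sides. Parametrizing the shock as $X=X(t)$ with $X'(t)=\sigma$, and writing $[\Phi(\mathbf{u})]$ and $[\Psi(\mathbf{u})]$ for the one-sided jumps across the shock, the two boundary integrals combine into
\[
\iint_{D}\bigl(\Phi\,\varphi_{t}+\Psi\,\varphi_{X}\bigr)\,dX\,dt\;=\;\int\varphi(X(t),t)\bigl(\sigma[\Phi(\mathbf{u})]-[\Psi(\mathbf{u})]\bigr)\,dt.
\]

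Finally, the entropy condition (\ref{ec-7}) forces the right-hand side to be nonnegative for every such $\varphi\geq 0$. Since the point $(X_{0},t_{0})$ on the shock and the nonnegative bump $\varphi$ were arbitrary, a standard localization argument yields $\sigma[\Phi(\mathbf{u})]-[\Psi(\mathbf{u})]\geq 0$ pointwise on the shock, which is (\ref{jump}). The main obstacle I expect is bookkeeping: keeping the orientations of $\partial D_{-}$ and $\partial D_{+}$ and the sign convention of the jump bracket consistent when combining the two Green's-theorem contributions, so that the final inequality comes out with the stated sign rather than its negative. A minor auxiliary point is that the shock curve must be regular enough (Lipschitz with a well-defined slope $\sigma$) for Green's theorem to apply, which is implicit in the hypothesis of a piecewise continuous weak solution with a shock curve of slope $\sigma$.
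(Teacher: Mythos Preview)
Your argument is the standard and correct general proof of the entropy jump inequality: localize near a shock point, apply Green's theorem on each side, use the smooth-region identity $\Phi(\mathbf{u})_t+\Psi(\mathbf{u})_X=0$ coming from (\ref{flux}), and conclude by arbitrariness of $\varphi\geq 0$. The only caveat worth stating explicitly is that ``piecewise continuous'' must be read as piecewise $C^{1}$ (or at least piecewise classical) so that the chain rule step is legitimate; you already flag this.

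The paper, by contrast, does not prove the proposition in the generality in which it is stated. It announces at the outset that it will verify (\ref{jump}) only for the explicit piecewise constant solution $S(\Gamma_l)$ of (\ref{ec-0206})--(\ref{ec-0207}), whose shock is the straight line $X=\sigma t$. Because $(V,\Gamma)$, and hence $\Phi(V,\Gamma)$ and $\Psi(V,\Gamma)$, are constant on each side of that line, the double integrals $\iint \Phi\varphi_t$ and $\iint \Psi\varphi_X$ are evaluated directly by integrating $\varphi_t$ and $\varphi_X$ over the two half-planes; this yields line integrals over $X=\sigma t$ without invoking Green's theorem or the interior identity $\Phi_t+\Psi_X=0$. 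A change of variable $X=\sigma t$ then collapses everything to a single factor multiplied by $\int_0^\infty\varphi(X,X/\sigma)\,dX$, and nonnegativity of $\varphi$ finishes the argument. Thus the paper's proof is a direct computation tailored to the specific constant-state shock, whereas yours handles an arbitrary piecewise smooth weak solution and an arbitrary (possibly curved) shock. Your route actually proves the proposition as written; the paper's route proves only the instance that is used thereafter.
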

\begin{proof}
  We  demonstrate (\ref{jump}), for our solution,  (\ref{ec-0206}) and (\ref{ec-0207}).

Here $\mathbf{u}=(V,\Gamma)$ satisfies the inequality (\ref{ec-7}), therefore we get



\small{$$ \Big(\int_0^{\infty} \varphi(X,X/\sigma)dX\Big)(\Phi(-V_0,0)-\Phi(0,\Gamma))+\Big(\int_0^{\infty} \varphi(t\sigma,t)dt\Big)(\Psi(0,\Gamma)-\Psi(-V_0,0))\geq 0.$$}

And after the substitution   $X=t\sigma$,  in the integral with respect to $t$, we obtain:


$$
\Big(\int_0^{\infty} \varphi(X,X/\sigma)dX\Big) \big[(\Phi(-V_0,0)-\Phi(0,\Gamma)) + \frac{1}{\sigma}((\Psi(0,\Gamma)-\Psi(-V_0,0))\big]\geq 0.
$$

Here $\varphi \geq 0$, therefore this last inequality is equivalent to

\begin{equation} \label{eec01}
\Phi(-V_0,0)-\Phi(0,\Gamma)\geq \frac{\Psi(-V_0,0)-\Psi(0,\Gamma)}{\sigma},
\end{equation}
whose compact form is  (\ref{jump}).
\end{proof}



The following Theorem states that in the case of genuine nonlinear systems, the entropy condition is satisfied for $\Gamma$
sufficiently close to zero.

\begin{theorem}
 \label{propo-import}
If $P(0)=0$, $P'(0)>0$ and $P''(0)<0$,  then for each entropy/entropy-flux pair $(\Phi,\Psi)$, where $\Phi$ is strictly convex,  $S(\Gamma)$ satisfies the entropy condition corresponding to $(\Phi,\Psi)$,  for all $\Gamma$ sufficiently close to zero and $\Gamma\leq 0$.
\end{theorem}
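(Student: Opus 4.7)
The goal is to establish the jump form (\ref{eec01}) of the entropy condition for the two-state solution $S(\Gamma_l)$. Set $c := \sqrt{P'(0)} > 0$ and define
\[ E(\Gamma_l) := \sigma(\Gamma_l)\bigl[\Phi(-V_0(\Gamma_l),0) - \Phi(0,\Gamma_l)\bigr] - \bigl[\Psi(-V_0(\Gamma_l),0) - \Psi(0,\Gamma_l)\bigr]. \]
Since $\sigma = -V_0/\Gamma_l > 0$ for $\Gamma_l < 0$, multiplying (\ref{eec01}) by $\sigma$ shows that the claim is equivalent to $E(\Gamma_l) \geq 0$ for $\Gamma_l \leq 0$ sufficiently close to $0$. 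At $\Gamma_l = 0$ the first Rankine--Hugoniot condition in (\ref{RHnonlinear}) forces $V_0 = 0$ and $S(\Gamma_l)$ reduces to the trivial solution, which satisfies the entropy condition by Remark \ref{trivial-sol}. It therefore suffices to prove $E(\Gamma_l) > 0$ for $\Gamma_l < 0$ small.

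The plan is to expand $E$ as a power series in $\Gamma_l$. First I would use $P(\Gamma) = c^2\Gamma + \tfrac{1}{2}P''(0)\Gamma^2 + O(\Gamma^3)$ together with $V_0^2 = \Gamma_l P(\Gamma_l)$ to obtain
\[ V_0(\Gamma_l) = -c\,\Gamma_l\left(1 + \frac{P''(0)}{4P'(0)}\,\Gamma_l + O(\Gamma_l^2)\right), \qquad \sigma(\Gamma_l) = c\left(1 + \frac{P''(0)}{4P'(0)}\,\Gamma_l + O(\Gamma_l^2)\right). \]
Next I would Taylor expand $\Phi$ and $\Psi$ about $(0,0)$ and substitute into $E(\Gamma_l)$, collecting powers of $\Gamma_l$. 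The relations (\ref{ec-6}) evaluated at $(0,0)$ give $\Psi_V(0,0) = -\Phi_\Gamma(0,0)$ and $\Psi_\Gamma(0,0) = -c^2\Phi_V(0,0)$, and their $V$- and $\Gamma$-derivatives supply the analogous identities needed at quadratic and cubic order. The integrability condition (\ref{integrability}) at $(0,0)$ gives $\Phi_{\Gamma\Gamma}(0,0) = c^2\Phi_{VV}(0,0)$, and one further differentiation yields $\Phi_{\Gamma\Gamma V}(0,0) = c^2\Phi_{VVV}(0,0)$ and $\Phi_{\Gamma\Gamma\Gamma}(0,0) = P''(0)\Phi_{VV}(0,0) + c^2\Phi_{VV\Gamma}(0,0)$.

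Carrying out the substitution, the coefficients of $\Gamma_l$ and $\Gamma_l^2$ in $E(\Gamma_l)$ collapse to zero: the linear order vanishes by the first-order entropy relations, while the quadratic order requires those relations together with the identity $\Phi_{\Gamma\Gamma}(0,0) = c^2\Phi_{VV}(0,0)$ and the explicit form of the second coefficient in the expansions of $V_0$ and $\sigma$. The surviving leading contribution is cubic, and it takes the clean form
\[ E(\Gamma_l) = \frac{P''(0)}{12}\Bigl(c\,\Phi_{VV}(0,0) - \Phi_{V\Gamma}(0,0)\Bigr)\Gamma_l^{3} + O(\Gamma_l^{4}). \]
Strict convexity of $\Phi$ forces $\Phi_{VV}(0,0) > 0$ and $\Phi_{VV}(0,0)\Phi_{\Gamma\Gamma}(0,0) - \Phi_{V\Gamma}(0,0)^2 > 0$; combining the latter with $\Phi_{\Gamma\Gamma}(0,0) = c^2\Phi_{VV}(0,0)$ gives $|\Phi_{V\Gamma}(0,0)| < c\,\Phi_{VV}(0,0)$, hence $c\,\Phi_{VV}(0,0) - \Phi_{V\Gamma}(0,0) > 0$. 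With $P''(0) < 0$ the cubic coefficient is strictly negative, while $\Gamma_l^{3} < 0$ for $\Gamma_l < 0$, so the cubic term is strictly positive and dominates the $O(\Gamma_l^{4})$ remainder on a small enough left neighborhood of $0$, yielding $E(\Gamma_l) > 0$ there.

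The main obstacle is the double cancellation that pushes the leading nontrivial order of $E$ from linear all the way down to cubic. This forces one to carry the expansions of $V_0$ and $\sigma$ consistently to second order and to invoke the first and second derivatives of (\ref{ec-6}) and (\ref{integrability}) at $(0,0)$; only after these cancellations is the cubic coefficient exposed, and from there the sign analysis via strict convexity and the hypothesis $P''(0) < 0$ is immediate.
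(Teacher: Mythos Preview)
Your proposal is correct and follows essentially the same strategy as the paper: both reduce the entropy jump condition to the sign of an auxiliary function $E(\Gamma_l)$, show that its first two Taylor orders at $0$ vanish (via the entropy relations (\ref{ec-6}) and the integrability condition (\ref{integrability})), and then determine the sign of the cubic term using strict convexity of $\Phi$ together with $P''(0)<0$. The paper carries this out by computing $\lim_{\Gamma\to 0^-}E^{(k)}$ for $k=0,1,2,3$ directly, obtaining $\lim E'''=-\tfrac{1}{2}P''(0)\bigl[\sqrt{P'(0)}\,\Phi_{VV}(0,0)-\Phi_{V\Gamma}(0,0)\bigr]$, which matches your cubic coefficient $\tfrac{P''(0)}{12}\bigl(c\,\Phi_{VV}(0,0)-\Phi_{V\Gamma}(0,0)\bigr)$ after dividing by $3!$ and accounting for the opposite sign convention in your $E$. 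Your convexity argument via the Hessian determinant $\Phi_{VV}\Phi_{\Gamma\Gamma}-\Phi_{V\Gamma}^2>0$ combined with $\Phi_{\Gamma\Gamma}=c^2\Phi_{VV}$ is a slightly cleaner route to $c\,\Phi_{VV}-\Phi_{V\Gamma}>0$ than the paper's rewriting of that expression as a positive quadratic form, but the content is the same.
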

\begin{proof}\
The proof is based on Taylor's expansion formula. 
We notice that for $\Gamma=0$, $S(0)=0$. Therefore by \textit{Remark} \ref{trivial-sol} this solution is an entropy solution.\\
Now, we consider $\Gamma <0$.
Let $\epsilon=-V_0$, then  $\epsilon=-\sqrt{\Gamma P(\Gamma)},$ and $\sigma=\frac{\epsilon}{\Gamma} \ .$\
Define $$E(\Gamma)=\frac{\epsilon}{\Gamma}[\Phi(0,\Gamma)-\Phi(\epsilon,0)]-[\Psi(0,\Gamma)-\Psi(\epsilon,0)].$$
Consequently,  the entropy jump condition, (\ref{jump}), holds if and only if 
$E(\Gamma)\leq 0. $
We now let a ``prime'' indicate differentiation with respect to $\Gamma$ .\\
Notice that 
$$\lim_{\Gamma\rightarrow 0^{-}}\Big(\frac{\epsilon}{\Gamma}\Big)=\sqrt{P'(0)} \
\text{and} \ \lim_{\Gamma\rightarrow 0^{-}}\Big(\frac{\epsilon}{\Gamma}\Big)'=\frac{P''(0)}{\sqrt{P'(0)}}.
$$
Thus
$\displaystyle{\lim_{\Gamma\rightarrow 0^{-}}E(\Gamma)}=0 .$
Now, using (\ref{ec-6}) we obtain 
 
$$ E'(\Gamma)=\Big(\frac{\epsilon}{\Gamma}\Big)'[\Phi(0,\Gamma)-\Phi(\epsilon,0)]+ \frac{\epsilon}{\Gamma}[\Phi_{\Gamma}(0,\Gamma)-\Phi_{\epsilon}(\epsilon,0)\epsilon'] +\Phi_{\epsilon}(0,\Gamma)P'(\Gamma)-\Phi_{\Gamma}(\epsilon,0)\epsilon'.$$
Thus  
 \begin{multline*}
\displaystyle{\lim_{\Gamma\rightarrow 0^{-}}E'(\Gamma)= \sqrt{P'(0)}\Big(\Phi_{\Gamma}(0,0)-\Phi_{\epsilon}(0,0) \sqrt{P'(0)}\Big)}\\+\Phi_{\epsilon}(0,0)P'(0)-\Phi_{\Gamma}(0,0) \sqrt{P'(0)}=0.
\end{multline*}
Furthermore,
\begin{multline*}
\displaystyle{E''(\Gamma)=\Big(\frac{\epsilon}{\Gamma}\Big)'' \Big(\Phi(0,\Gamma)-\Phi(\epsilon,0)\Big)+2\Big(\frac{\epsilon}{\Gamma}\Big)'
\Big(\Phi_{\Gamma}(0,\Gamma)-\Phi_{\epsilon}(\epsilon,0)\epsilon'\Big)}\\ +\Big(\frac{\epsilon}{\Gamma}\Big)\Big(\Phi_{\Gamma\Gamma}(0,\Gamma)
-\epsilon ''\Phi_{\epsilon}(\epsilon,0)-\Phi_{\epsilon\epsilon}(\epsilon,0)(\epsilon')^2\Big)  +
P''(\Gamma)\Phi_{\epsilon}(0,\Gamma)\\+P'(\Gamma)\Phi_{\epsilon\Gamma}(0,\Gamma)-\epsilon''\Phi_{\Gamma}(\epsilon,0)-(\epsilon')^2\Phi_{\Gamma\epsilon}(\epsilon,0). \end{multline*}
Hence 
\begin{multline*}
\displaystyle{\lim_{\Gamma\rightarrow 0^{-}}E''(\Gamma)=}2\Bigg(\frac{P''(0)}{4\sqrt{P'(0)}}\Bigg)\Bigg[\Phi_{\Gamma}(0,0)
-\Phi_{\epsilon}(0,0) \sqrt{P'(0)}\Bigg]\\ + \sqrt{P'(0)}\Bigg[ \Phi_{\Gamma \Gamma}(0,0)-\frac{P''(0)}{2\sqrt{P'(0)}}\Phi_{\epsilon}(0,0)-\Phi_{\epsilon\epsilon}(0,0)
P'(0)\Bigg]\\+P''(0)\Phi_{\epsilon}(0,0)+P'(0)\Phi_{\epsilon\Gamma}(0,0)-\frac{P''(0)}{2\sqrt{P'(0)}}\Phi_{\Gamma}(0,0)
-P'(0)\Phi_{\Gamma\epsilon}(0,0)=0.
\end{multline*}
Finally,
\begin{equation*}
\begin{split}
E'''(\Gamma)=&\Big(\frac{\epsilon}{\Gamma}\Big)''' \Big(\Phi(0,\Gamma)-\Phi(\epsilon,0)\Big) +3\Big(\frac{\epsilon}{\Gamma}\Big)''\Big(\Phi_\Gamma(0,\Gamma)-\Phi_\epsilon(\epsilon,0)\epsilon'\Big)+\\
&3\Big(\frac{\epsilon}{\Gamma}\Big)' \Big(\Phi_{\Gamma\Gamma}(0,\Gamma)-\epsilon^{ \prime \prime} \Phi_\epsilon(\epsilon,0)-(\epsilon^ \prime)^2\Phi_{\epsilon\epsilon}(\epsilon,0)\Big)+\\
& \Big(\frac{\epsilon}{\Gamma}\Big) \Big[\Phi_{\Gamma\Gamma\Gamma}(0,\Gamma)-\epsilon'''\Phi_\epsilon(\epsilon,0)-3\epsilon'\epsilon''\Phi_{\epsilon\epsilon}(\epsilon,0)
-(\epsilon')^3\Phi_{\epsilon\epsilon\epsilon}(\epsilon,0) \Big]+\\
& P'''(\Gamma)\Phi_\epsilon(0,\Gamma)+2P''(\Gamma)\Phi_{\epsilon\Gamma}(0,\Gamma)+
P'(\Gamma)\Phi_{\epsilon\Gamma\Gamma}(0,\Gamma)
-\epsilon'''\Phi_\Gamma(\epsilon,0)\\
&-3\epsilon'\epsilon''\Phi_{\Gamma\epsilon}(\epsilon,0)-(\epsilon')^3\Phi_{\Gamma \epsilon\epsilon}(\epsilon,0),
\end{split}
\end{equation*}\\



therefore 
\begin{multline*}
\displaystyle{\lim_{\Gamma\rightarrow 0^{-}}E'''(\Gamma)=-3\Bigg[\frac{(1/4)(P''(0))^2-(2/3)P'(0)P'''(0)}{4\big(\sqrt{P'(0)}\big)^3}\Bigg]}
\Big[\Phi_{\Gamma}(0,0)-\Phi_{\epsilon}(0,0)\sqrt{P'(0)}\Big] \\
+\frac{3P''(0)}{4\sqrt{P'(0)}}\Bigg[\Phi_{\Gamma\Gamma}(0,0)- \frac{P''(0)}{2\sqrt{P'(0)}}\Phi_{\epsilon}(0,0)-\Phi_{\epsilon\epsilon}(0,0)P'(0)\Bigg]\\
+\sqrt{P'(0)}\Bigg[\Phi_{\Gamma\Gamma\Gamma}(0,0)-\Bigg(\frac{4P'(0)P'''(0)-
(3/2)(P''(0))^2}{8(P'(0))^{3/2}}\Bigg)\Phi_{\epsilon}(0,0)
\\-\frac{3\sqrt{P'(0)}P''(0)}{2\sqrt{P'(0)}}\Phi_{\epsilon\epsilon}(0,0)-\big(\sqrt{P'(0)}\big)^3\Phi_{\epsilon\epsilon\epsilon}(0,0)\Bigg]
+P'''(0)\Phi_{\epsilon}(0,0)\\+2P''(0)\Phi_{\epsilon\Gamma}(0,0)+P'(0)\Phi_{\epsilon \Gamma\Gamma}(0,0)-
\Bigg(\frac{4P'(0)P'''(0)-(3/2)(P''(0))^2}{8(P'(0))^{3/2}}\Bigg)\Phi_{\Gamma}(0,0)
\\ -\frac{3\sqrt{P'(0)}P''(0)}{2\sqrt{P'(0)}}\Phi_{\Gamma\epsilon}(0,0)-\big(\sqrt{P'(0)}\big)^3\Phi_{\Gamma\epsilon\epsilon}(0,0).
\end{multline*}
However from (\ref{integrability}) it follows that 

$$\Phi_{\Gamma\Gamma}(\epsilon,\Gamma)-\Phi_{\epsilon\epsilon}(\epsilon,\Gamma)P'(\Gamma)=0,$$
$$\Phi_{\Gamma\Gamma\epsilon}(\epsilon,\Gamma)-\Phi_{\epsilon\epsilon\epsilon}(\epsilon,\Gamma)P'(\Gamma)=0,$$
and
$$\Phi_{\Gamma\Gamma\Gamma}(\epsilon,\Gamma)-P''(\Gamma)\Phi_{\epsilon\epsilon}(\epsilon,\Gamma)-\Phi_{\epsilon\epsilon\Gamma}(\epsilon,\Gamma)P'(\Gamma)=0.$$
Consequently
\begin{equation} \label{derivada3}
\displaystyle{\lim_{\Gamma\rightarrow 0^{-}}E'''(\Gamma)}=-\frac{1}{2}P''(0) [\sqrt{P'(0)}\Phi_{\epsilon\epsilon}(0,0)
-\Phi_{\Gamma\epsilon}(0,0)].
\end{equation}
On the other hand, since $\Phi$ is strictly convex,  we know that for all nonzero $(a,b)\in \mathbb{R}^2$
\begin{equation}\label{ec-041}
\Phi_{\epsilon\epsilon}(0,0)a^2+2\Phi_{\epsilon\Gamma}(0,0)ab+\Phi_{\Gamma\Gamma}(0,0)b^2 > 0.
\end{equation}
Now, we demonstrate that the expression
$
\sqrt{P'(0)}\Phi_{\epsilon\epsilon}(0,0)-\Phi_{\Gamma\epsilon}(0,0)
$
in (\ref{derivada3}) can be rewritten in a form of the left hand size of (\ref{ec-041})
with $a$ and $b$ appropriately  chosen. To prove that, we modify this expression by an additive, equal to zero term  $\alpha\Phi_{\Gamma\Gamma}(0,0)-
\alpha\Phi_{\epsilon\epsilon}(0,0)P'(0)$, where $\alpha$ to be determined.\\
Thus $a$ and $b$ have to be chosen so that
\begin{equation}\label{ec-042}
 (\sqrt{P'(0)}-\alpha P'(0))\Phi_{\epsilon\epsilon}-\Phi_{\Gamma\epsilon}+\alpha\Phi_{\Gamma\Gamma}=
 \Phi_{\epsilon\epsilon}a^2+2\Phi_{\epsilon\Gamma}ab+\Phi_{\Gamma\Gamma}b^2
\end{equation}
holds, where the derivatives of $\Phi$ are at $(0,0)$.\\
Consequently we require that
\begin{eqnarray*}
a^2&=& \sqrt{P'(0)}-\alpha P'(0),\\
2ab&=&-1,\\
\alpha&=& b^2.
\end{eqnarray*}
Solving the system for $a$, $b$ and $\alpha$, we get
$a=\pm \frac{\sqrt[4]{P'(0)}}{\sqrt{2}},$
$b=\mp \frac{1}{\sqrt{2} \sqrt[4]{P'(0)}},$
and $\alpha=\frac{1}{2 \sqrt{P'(0)}}.$
In this way, we conclude from (\ref{derivada3}) that
$\displaystyle{\lim_{\Gamma\rightarrow 0^{-}}}E'''(\Gamma)>0.$
Now, to conclude the proof, we use the 
  following two \textit{lemmas}, whose proofs are straightforward.
\begin{lemma} \label{lema1.1}
Let $f$ and $f'$ be continuous functions for $\Gamma<0$. If $\lim\limits _{\Gamma \rightarrow 0^{-}}f(\Gamma)=0$ and
$\lim\limits _{\Gamma \rightarrow 0^{-}}f'(\Gamma)>0$, then there exists $\Gamma_0 \in (-1,0)$, such that  $f(\Gamma)<0$ for $\Gamma_0<\Gamma<0$.

\end{lemma}




\begin{lemma} \label{lema1.2}
Let $f$ and $f'$ be continuous functions for $\Gamma<0$. If $\lim\limits _{\Gamma \rightarrow 0^{-}}f(\Gamma)=0$ and
$\lim\limits _{\Gamma \rightarrow 0^{-}}f'(\Gamma)<0$ then,  $f(\Gamma)>0$ for $\Gamma_1<\Gamma<0$, where
$\Gamma_1 \in (-1,0)$.
\end{lemma}
Applying  \textit{Lemma} \ref{lema1.1} with $f(\Gamma)=E''(\Gamma)$, we conclude that $E''(\Gamma)<0$  for $\Gamma$ close to zero and,
using \textit{Lemma} \ref{lema1.2}, with $f(\Gamma)=E'(\Gamma)$, we infer that $E'(\Gamma)>0$ near zero. Thus, using \textit{lemma} \ref{lema1.1} again, now, with  $f(\Gamma)=E(\Gamma)$, we conclude that $E(\Gamma)<0$ near zero.
\end{proof}

%
%

\subsection{Entropy condition for a solution of $IBVP_{V_{0}}$}
It is difficult to describe explicitly all entropy functions $\Phi$. Nevertheless, employing separation of variables, we can figure out
one of them, which we shall call a standard entropy function.\\
  For this purpose we set  $\Phi(V,\Gamma)=a(V)+b(\Gamma)$, where $a$ and $b$ are functions to be determined. Substituting it in (\ref{integrability}), we obtain
$a''(V)=\frac{b''(\Gamma)}{P'(\Gamma)}.$
Since $V$ and $\Gamma$ are independent variables, therefore there exists a constant denoted by $c$ such that
$a''(V)=\frac{b''(\Gamma)}{P'(\Gamma)}=c,$
which implies
$a(V)=c\frac{V^2}{2}+c_1V+c_2; \ c_1,c_2\in \mathbb{R},$
and
$b(\Gamma)=c\int_0^\Gamma P(w) dw+c_3\Gamma +c_4; \ c_3,c_4 \in \mathbb{R}.$
Thus, putting $c_1=c_2=c_3=c_4=0$, we get
$\Phi(V,\Gamma)=c\frac{V^2}{2}+ c\int_0^\Gamma P(w)dw.$
Substituting $\Phi$ into (\ref{ec-6}), results in
\begin{equation}\label{ec-6-01}
\begin{split}
\Psi_V&=-c P(\Gamma),\\
\Psi_\Gamma &=-cV P'(\Gamma).
\end{split}
\end{equation}
A  solution of the system (\ref{ec-6-01}) is
$\Psi(V,\Gamma)=-cVP(\Gamma).$
Notice that, strict convexity of $\Phi$ implies that 
$c >0$ and $P'(\Gamma)>0$. Thus, without loss of generality, we may  put  $c=1$, thereby 
we obtain 
\begin{equation}\label{eec03}
\Phi(V,\Gamma)= \frac{V^2}{2}+\int_0^\Gamma P(w)dw
\end{equation}
and
\begin{equation}\label{eec04}
\Psi(V,\Gamma)=-P(\Gamma)V.
\end{equation}
The function (\ref{eec03}) is well known entropy function for a p-system, \cite{EVA},
which we call a standard entropy function.\\
For the solution $S(\Gamma_l)$, (\ref{RHnonlinear}),   the condition (\ref{ec-7}) can be simplified into (\ref{jump}). Here,  $P(0)=0$, $ \Phi(-V_0,0)=\frac{V_0^2}{2}, \ \ \Phi(\Gamma,0)=\int_0^{\Gamma} P(w)dw,\ \Psi(-V_0,0)=0,$ and
$  \Psi(0,\Gamma)=0$, so that (\ref{jump})   becomes
\begin{equation}\label{entropy-particular}
2\int^{\Gamma_l}_0 P(w)dw \leq \Gamma_l P(\Gamma_l).
\end{equation}
This is    the entropy condition for $S(\Gamma_l)$ corresponding to  a standard entropy function, (\ref{eec03}), and $\Psi$ given by (\ref{eec04}).
\begin{remark}
The assertion of Theorem \ref{propo-import} does not say how far from $0$ the inequality still
holds or it already does not hold. It is rather difficult, except of a linear case, to answer this question
without having more particular information about the entropy functions. That is why we concentrate ourselves
on studying the inequality (\ref{entropy-particular}),
for previously listed models of elastic materials.
\end{remark}
We notice the following facts, which, clarify an importance of genuine nonlinearity condition in studying  the entropy condition (\ref{entropy-particular}).

\begin{lemma}\label{propo7}
If $P(0)=0$ and $P^{''}(\Gamma)<0$ for all $\Gamma \in (-1,0)$, then $S(\Gamma_l)$ satisfies (\ref{entropy-particular}) for all
$\Gamma_l \in (-1,0]$.
\end{lemma}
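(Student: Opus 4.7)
The plan is to rewrite the inequality (\ref{entropy-particular}) as the nonnegativity of a single auxiliary function and then exploit the convexity that the hypothesis $P''<0$ forces on it. Concretely, I would set
\[
F(\Gamma) := \Gamma P(\Gamma) - 2\int_0^{\Gamma} P(w)\,dw, \qquad \Gamma\in(-1,0],
\]
so that (\ref{entropy-particular}) is precisely the assertion $F(\Gamma_l)\ge 0$. Note $F(0)=0$ immediately from $P(0)=0$.

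Next I would differentiate twice. A direct computation gives
\[
F'(\Gamma)=P(\Gamma)+\Gamma P'(\Gamma)-2P(\Gamma)=\Gamma P'(\Gamma)-P(\Gamma),
\]
so $F'(0)=0$ (again using $P(0)=0$), and
\[
F''(\Gamma)=P'(\Gamma)+\Gamma P''(\Gamma)-P'(\Gamma)=\Gamma P''(\Gamma).
\]
For $\Gamma\in(-1,0)$ both $\Gamma<0$ and $P''(\Gamma)<0$, hence $F''(\Gamma)>0$ throughout $(-1,0)$.

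From here the conclusion is a short monotonicity argument. Since $F''>0$ on $(-1,0)$, the derivative $F'$ is strictly increasing there; combined with $F'(0)=0$ this yields $F'(\Gamma)<0$ for every $\Gamma\in(-1,0)$. Therefore $F$ itself is strictly decreasing on $(-1,0)$, and because $F(0)=0$ we obtain $F(\Gamma_l)>0$ for $\Gamma_l\in(-1,0)$, with equality at $\Gamma_l=0$. This is exactly (\ref{entropy-particular}). There is no genuine obstacle in this argument: the only thing to be careful about is the sign bookkeeping for $\Gamma<0$, which is what makes the product $\Gamma P''(\Gamma)$ positive and thus gives the crucial convexity of $F$; this is precisely where the genuine nonlinearity hypothesis $P''<0$ enters.
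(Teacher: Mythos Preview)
Your proof is correct and follows essentially the same approach as the paper: the paper introduces $G(\Gamma_l)=2\int_0^{\Gamma_l}P(w)\,dw-\Gamma_l P(\Gamma_l)=-F(\Gamma_l)$ and simply asserts that $G$ is increasing on $(-1,0)$, which is exactly your monotonicity argument with the opposite sign convention. You actually supply the details (the computation of $F''=\Gamma P''$) that the paper omits.
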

\begin{proof}
 Consider the function
  $G(\Gamma_l)=2\int_0^{\Gamma_l}P(w)dw-\Gamma_l P(\Gamma_l) $;
   and notice that $G(\Gamma_l)$ is increasing for all
   $\Gamma_l\in(-1,0)$.
\end{proof}
Similarly we have the following proposition: 
\begin{lemma}\label{propo8}
If $P(0)=0$ and $P^{''}(\Gamma_l)>0$ for $\overline{\Gamma}<\Gamma_l<0$, where $\overline{\Gamma} \in (-1,0)$, then
$S(\Gamma_l)$ does not  satisfy (\ref{entropy-particular}).  Therefore $S(\Gamma_l)$ does not satisfy the  entropy condition for $\overline{\Gamma}<\Gamma_l<0$.
\end{lemma}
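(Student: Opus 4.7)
The plan is to mimic the strategy used in the proof of Lemma \ref{propo7}, namely to study the sign of the auxiliary function
\[
G(\Gamma_l) := 2\int_0^{\Gamma_l} P(w)\,dw - \Gamma_l\,P(\Gamma_l),
\]
since the entropy condition (\ref{entropy-particular}) is exactly $G(\Gamma_l)\le 0$. Thus the conclusion will follow once I exhibit the strict inequality $G(\Gamma_l)>0$ for $\overline{\Gamma}<\Gamma_l<0$.

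First I would compute the first two derivatives of $G$. A direct differentiation gives $G'(\Gamma_l)=P(\Gamma_l)-\Gamma_l P'(\Gamma_l)$, and then $G''(\Gamma_l)=-\Gamma_l P''(\Gamma_l)$. Using the hypothesis $P(0)=0$, both $G(0)=0$ and $G'(0)=0$. Now by assumption $P''(\Gamma_l)>0$ on $(\overline{\Gamma},0)$, and of course $-\Gamma_l>0$ there, so $G''(\Gamma_l)>0$ on this interval.

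Therefore $G'$ is strictly increasing on $(\overline{\Gamma},0)$. Combined with the boundary value $G'(0)=0$ and the continuity of $G'$ up to $0$, this forces $G'(\Gamma_l)<0$ for all $\overline{\Gamma}<\Gamma_l<0$. Hence $G$ is strictly decreasing on $(\overline{\Gamma},0)$, and since $G(0)=0$ we conclude $G(\Gamma_l)>0$ on $(\overline{\Gamma},0)$. This violates (\ref{entropy-particular}), so $S(\Gamma_l)$ fails the entropy condition associated with the standard entropy/entropy-flux pair on that interval.

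The argument is essentially elementary once the right auxiliary function is identified; no real obstacle arises, since the computation of $G'$ and $G''$ is short and the sign propagation from $G''>0$ through $G'(0)=0$ down to $G(0)=0$ is a standard one-sided monotonicity argument. The only mild subtlety is to invoke continuity of $G'$ and $G$ at the endpoint $0$ (which is automatic under the implicit $C^2$ regularity of $P$ used throughout the paper) in order to convert strict monotonicity on the open interval into the strict sign statement at interior points.
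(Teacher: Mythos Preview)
Your proof is correct and follows exactly the approach the paper intends: the paper proves Lemma~\ref{propo7} by noting that $G(\Gamma_l)=2\int_0^{\Gamma_l}P(w)\,dw-\Gamma_l P(\Gamma_l)$ is increasing on $(-1,0)$ when $P''<0$, and then states Lemma~\ref{propo8} with the remark ``Similarly we have the following proposition''. Your argument is precisely that similar computation with the sign of $P''$ reversed, carried out in full detail.
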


\section{Results on hyperbolicity, genuine nonlinearity and entropy condition with a standard entropy function}

In this section we present the results about the conditions of hyperbolicity ($P'(\Gamma)>0$),
genuine nonlinearity ($P''(\Gamma)<0$) and the entropy condition 
(see equation ($\ref{entropy-particular}$))
for the models under consideration. To attain this goal, we use 
basic techniques of differential calculus and the Maple software to perform 
symbolic computation and to study the graphs of functions.

In some cases it is convenient to use instead of $\Gamma$
a variable $s=\Gamma+1$, restricted by $s>0$, since $\Gamma$ is subject to
$\Gamma>-1$. 

\begin{enumerate}

\item St.Venant-Kirchhoff
\begin{enumerate}
\item It is hyperbolic for all $\Gamma>-1+1/\sqrt{3}$.
\item The condition of genuine nonlinearity is satisfied for 
all $\Gamma>-1$.
\item For all $s\in (0,1)$, $S(\Gamma_l)$ do not satisfy the entropy condition.

\end{enumerate}

\item Kirchhoff modified

\begin{enumerate}

\item It is hyperbolic for all $\Gamma>-1$ provided a parameter
$\frac{\lambda}{\mu}$ satisfies $\alpha_1<\alpha<\alpha_2$
where $\alpha_1$ and $\alpha_2$ are two positive solutions 
of the following equation 
$$6(5+4\log 6)\alpha=1+12\alpha\log(3+3\sqrt{1+12\alpha})+\sqrt{1+12\alpha}.$$
An approximate inequality for $\alpha$ is 
$$0.0446567295<\alpha<1732.05696$$

\item 
$P''(\Gamma)<0$ holds only for all $s\in (0,S_\alpha)$, where
$$S_\alpha=\bigg[\frac{\alpha}{2}\operatorname{Lambert}W\bigg(\frac{12e^{6}}{\alpha}\bigg)\bigg]^{1/4}$$
and where $\operatorname{Lambert}W$ is the inverse of the function 
$we^{w}$. Consequently $P''(s)<0$ holds for all $s\in (0,1]$
iff $s_\alpha>1$, what is equivalent to $\alpha>2$.

\item 
\begin{itemize}
\item If $\alpha\geq 2$, then $S(\Gamma_l)$ satisfies the entropy condition
for all $s\in (0,1)$. 
\item If $0<\alpha <2$, then $S(\Gamma_l)$ satisfies the entropy condition
for all $s\in (0,s_e]$ and does not for $s\in (s_e,1)$, where $s_e$
is a unique solution in $(0,1)$ of the equation 
$$\frac{s(s+1)(1-s)^{3}}{2(s-1-s\ln s)\ln s}=\alpha.$$
\end{itemize}
\end{enumerate}

\item Ogden
\begin{enumerate}

\item It is hyperbolic for all $\Gamma>-1$.
\item Satisfies that $P''(s)<0$ for all $s>0$. 
\item $S(\Gamma_l)$ satisfies the entropy condition, for all $s\in (0,1]$. 
\end{enumerate}

\item Blatz-Ko-Ogden
\begin{enumerate}

\item There are two parameters involved $\beta=\frac{\lambda}{2\mu}$ and
$f\in (0,1)$.
\begin{itemize}
\item If $\beta\geq 1/2$, it is hyperbolic for all $\Gamma>-1$.
\item If $0<\beta<1/2$, the hyperbolicity condition requires a
restriction for $f$ of the form $f>f_\beta$, where $f_\beta$
is a certain number in $(0,1)$ determined according to 
$f_\beta=\max_{s>s_\beta}Q(s,\beta)$
where 
$$Q(s,\beta)=\frac{s^{2\beta}[(1-2\beta)s^{2\beta+2}-3]}{s^2[(1+2\beta)+s^{2\beta+2}]+s^{2\beta}[(1-2\beta)s^{2\beta+2}-3]}$$
and 
$$s_\beta=\big(\frac{3}{1-2\beta}\big)^{\frac{1}{2\beta+2}},$$
here $f_\beta\leq \frac{1-2\beta}{1-2\beta+s_\beta^{2-2\beta}}.$
\end{itemize}

\item 
\begin{itemize}
\item If $\beta\in [1/2,1]$ then 
$P''(s)<0$ for all $s>0$. 
\item If $\beta \in (0,1/2)\cup(1,\infty)$, then for all $s\leq s_0$,
where $$s_0=\bigg[\frac{6}{(2\beta-1)(\beta-1)}\bigg]^{\frac{1}{2\beta+2}}$$
it holds $P''(s)<0$.
\item If $\beta \in (0,1/2)\cup(1,\infty)$ and $s>s_0$, then 
there exists $s_2$ such that $P''(s)<0$ up to $s_2$ and
then it changes its sign. 

\end{itemize}
\

\item 
\begin{itemize}
\item If $0<\beta\leq 5/2$ then $P''(s)<0$ for all $s\in (0,1]$. 
\item An experimentation with plots indicates that for a given value of
$\beta>5/2$ there exists $f_\beta\in (0,1)$ such that, 
$S(\Gamma_l)$ satisfies the entropy condition, for all $s\in (0,1]$,
provided $f\geq f_\beta$. If however $f<f_\beta$, then
there exists $s_\beta\in (0,1]$ such that the condition
holds for all $s\in (0,s_\beta)$ and does not
for $s\in (s_\beta,1)$, while at $s=1$ it holds again.
We have been able to confirm theoretically such behavior of the condition
only for $\beta=n/2$ , where $n$ is an integer and $n>5$.  

\end{itemize}

\end{enumerate}

\end{enumerate}







\newpage

\section{Conclusions}

\begin{enumerate}
\item A definition of a weak solution of an initial and boundary problem for
a p-system, in the first qudrant of the $Xt$-plane, is provided. There are
two unknown functions $V\left( X,t\right) $ and $\Gamma \left( X,t\right) $.
Consequently there are two initial conditions (at $t=0$) and only one
boundary condition (at $X=0$). There are four types of boundary conditions
considered: the first, (\ref{ec3-2}), for $V\left( 0,t\right) $, the second,
(\ref{ec8}), for $\Gamma \left( 0,t\right) $ and the other two are mixed boundary
conditions involving $V\left( 0,t\right) $ and $\Gamma \left( 0,t\right) $,
(\ref{ec-m1}) and  (\ref{ec-m2}) respectively. The first two types of boundary conditions are
particular cases of the other two. All of that is consistent with what is
known in case of classical solutions of linear systems, \cite{EVA}.

\item A particular weak solution of a p-system, called a compression shock
is constructed. It satisfies the initial and boundary conditions given by (%
\ref{ec281}), which is a particular case of (\ref{ec3-2}). This solution,
denoted by $S\left( \Gamma _{l}\right) $, which can be interpreted as an impact velocity. 
$S\left( \Gamma _{l}\right) $
is constant by parts, having jump discontinuities of $V$ and $\Gamma $ along
the line $X=\sigma t$; $\left( V,\Gamma \right) =\left( -V_{0},0\right) $,
for $X>\sigma t$ $\ $and $\left( V,\Gamma \right) =\left( 0,\Gamma
_{l}\right) $, $X<\sigma t$, where the constants $\sigma >0$ and $\Gamma
_{l}<0$ are solutions of the Rankine-Hugoniot conditions. 


\ \


 \item  For the St.Venant-Kirchhoff model $S(\Gamma _{l})$ does not satisfy the entropy condition. Consequently we can consider this model as inadequate to
describe the compression shock.
For the Kirchhoff modified, Ogden and Blatz-Ogden models we can verify that they satisfy, under certain restrictions on the parameters, the hypothesis of the  Theorem \ref{propo-import}.  Therefore for those models $S(\Gamma _{l})$  satisfies the entropy condition, for $%
\Gamma _{l}$ sufficiently close to zero.

\item  The Theorem \ref{propo-import}   does not provide an exact information about the interval for  $\Gamma_l$ in which
the entropy condition holds. That is why we concentrate on the entropy
condition with a well known in literature \cite{EVA}, entropy/entropy-flux pair $(\Phi ,\Psi )$, which
we call a standard entropy/entropy-flux pair.
 We  provide  the conditions for the parameters $\mu $,$\lambda $, $f$ and for $\Gamma_l$, under which $S(\Gamma _{l})$ fulfills
 the entropy condition with this standard entropy function. This discussion is  complete, except of the Blazt-Ko and Ogden model
for  $\beta>\frac{5}{2}$. In this case we  clarify the validity of the entropy condition only for $\beta=\frac{n}{2}$, where $n$ is an integer number greater than $5$.

 \item An open question remains about the entropy condition with a general entropy function.
\end{enumerate}

\newpage
\appendix

\begin{appendices}

\section{Numerical comparison of the compression shocks for various models}

In this section we obtain numerical values of $\Gamma_l$, for the compression
shock corresponding to given values of $V_0$; more specifically we use 
$\widetilde{V_0}=\frac{\rho_0V_0^2}{\mu}.$
We do this for the following models: Modified Krchhoff, Ogden and 
Blatz-Ko-Ogden.

Here $\Gamma_l$ is determined by the first equation in $(\ref{RHnonlinear})$, which after substituting
$\lambda=2\mu \beta$ can be rewritten in the form $Q(\Gamma)=\widetilde{V_0}$,
where  

$$Q(\Gamma)=\Gamma\bigg(\mu(1+\Gamma)^3-(1+\Gamma)+2\beta \frac{\ln(1+\Gamma)}{(1+\Gamma)} \bigg) \ \ \text{(Modified Kirchhoff)}.$$
$$Q(\Gamma)=\Gamma\bigg(2\beta \Gamma+\frac{(2+\Gamma)\Gamma}{\Gamma+1}\bigg)
\ \ \text{(Ogden model)}.$$
$Q(\Gamma)=\Gamma(1+\Gamma)\Bigg\{f \bigg[1-(1+\Gamma)^{-2\beta-2}\bigg]+\frac{(1-f)}{(1+\Gamma)^4}\bigg[(1+\Gamma)^{2\beta+2}-1\bigg]\Bigg\}\\
\ \ \text{(Blatz-Ko-Ogden model)}.$









\begin{table}[!htb]
\centering
\begin{tabular}{|c|c|c|c|c|}
 \hline 
   \diagbox{$\widetilde{V_0}$}{$\Gamma$}& Ogden & M.Kirchhoff& Blatz-Ko $(f = 0.25)$ & Blat-Ko $(f = 0.5)$ \\ 
 \hline 
 \hline
0.1	& -0.1912	&-0.217420 &	  -0.373581	& -0.447296\\
0.25	& -0.2929	&-0.351888 &  -0.386761	&-0.457802\\
0.5	& -0.3978	&-0.486632 &   -0.407276	&-0.474264\\
2	& -0.6667	&-0.733399 &   -0.495098	&-0.547908\\
4	& -0.7938	&-0.818724	&  -0.559164	&-0.604841\\
10	& -0.9063	&-0.897073	&  -0.646584	&-0.684415\\
40	& -0.9753	&-0.960995	&  -0.760038	&-0.787456\\

 \hline 
 \end{tabular}  
 \caption{$\beta=0.25$}
\label{tabla1}
 \end{table}

 \begin{table}[!htb]
\centering
\begin{tabular}{|c|c|c|c|c|}
 \hline 
  \diagbox{$\widetilde{V_0}$}{$\Gamma$}& Ogden & M.Kirchhoff& Blatz-Ko $(f = 0.25)$ & Blat-Ko $(f = 0.5)$ \\ 
 \hline 
 \hline
0.1   &   -0.1764   &     -0.187793  &           -0.396762   &          -0.467705\\
 0.25  &      -0.2722&     -0.294512  &           -0.406156  &          -0.475495\\
 0.5   &      -0.3729 &    -0.401976  &            -0.42134  &          -0.488051\\
2      &     -0.6446  &   -0.638674   &          -0.495152   &          -0.550033\\
4      &     -0.7808  &   -0.739561   &          -0.556282   &          -0.603611\\
10     &     -0.9027  &   -0.842347   &          -0.643929   &          -0.682669\\
40     &     -0.9678  &  -0.9357075   &         -0.759011    &          -0.786753\\

 \hline 
 \end{tabular}  
 \caption{$\beta=0.5$}
\label{tabla2}
 \end{table}

 \begin{table}[!htb]
\centering
\begin{tabular}{|c|c|c|c|c|}
 \hline 
  \diagbox{$\widetilde{V_0}$}{$\Gamma$}& Ogden & M.Kirchhoff& Blatz-Ko $(f = 0.25)$ & Blat-Ko $(f = 0.5)$ \\ 
 \hline 
 \hline
 
   0.1 &        -0.1276  &  -0.123866 &            -0.516574 &               -0.571569\\
  0.25 &         -0.2    &  -0.189781 &            -0.518832 &               -0.573691\\
  0.5  &       -0.2798   &  -0.257764 &            -0.522632 &               -0.577243\\
  2    &       -0.5298   &  -0.440641 &            -0.546019 &               -0.598641\\
  4    &       -0.6951   &  -0.546173 &            -0.576544 &               -0.625725\\
  10   &       -0.8757   &  -0.681674 &            -0.645033 &               -0.685736\\
  40   &       -0.9731   &  -0.843239 &            -0.758247 &               -0.786419 \\

 \hline 
 \end{tabular}  
 \caption{$\beta=2$}
\label{tabla3}
 \end{table}
 
 \begin{table}[!htb]
\centering
\begin{tabular}{|c|c|c|c|c|}
 \hline 
   \diagbox{$\widetilde{V_0}$}{$\Gamma$}& Ogden & M.Kirchhoff& Blatz-Ko $(f = 0.25)$ & Blat-Ko $(f = 0.5)$ \\ 
 \hline 
 \hline
 
 0.1&         -0.0909 &    -0.145165   &          -0.646008   &               -0.68264\\
0.25 &      -0.1433   &    -0.223122   &          -0.646471   &               -0.683109\\
 0.5  &       -0.2020 &    -0.302771   &          -0.647248   &               -0.683894\\
2     &      -0.3975  &    -0.506407   &          -0.652028   &               -0.688701\\
4     &      -0.5501  &    -0.614488   &          -0.658692   &               -0.695328\\
10    &      -0.7941  &    -0.743021   &          -0.680005   &               -0.715904\\
40    &      -0.9684  &    -0.881932   &          -0.759992   &               -0.788159 \\

 \hline 
 \end{tabular}  
 \caption{$\beta=5$}
\label{tabla4}
 \end{table}

\end{appendices}

\newpage

{\footnotesize

\end{document}